\renewcommand{\AA}{\mathbf{A}}
\newcommand{\QQ}{\mathbf{Q}}
\newcommand{\CC}{\mathbf{C}}
\newcommand{\ZZ}{\mathbf{Z}}
\newcommand{\RR}{\mathbf{R}}
\newcommand{\Zp}{\ZZ_p}
\newcommand{\Qp}{\QQ_p}
\newcommand{\fl}{\mathfrak{l}}
\newcommand{\fg}{\mathfrak{g}}
\newcommand{\fp}{\mathfrak{p}}
\newcommand{\fpb}{{\overline{\fp}}}
\newcommand{\fq}{\mathfrak{q}}
\newcommand{\ff}{\mathfrak{f}}
\newcommand{\fm}{\mathfrak{m}}
\newcommand{\fF}{\mathfrak{F}}
\newcommand{\ord}{\mathrm{ord}}
\newcommand{\la}{\mathrm{la}}
\newcommand{\cO}{\mathcal{O}}
\newcommand{\cE}{\mathcal{E}}
\newcommand{\into}{\hookrightarrow}
\newcommand{\cG}{\mathcal{G}}
\newcommand{\cH}{\mathcal{H}}
\DeclareMathOperator{\GL}{GL}
\DeclareMathOperator{\Lie}{Lie}
\DeclareMathOperator{\Tr}{Tr}
\DeclareMathOperator{\Norm}{Norm}
\newtheorem{theorem}{Theorem}[section]
\newtheorem{proposition}[theorem]{Proposition}
\newtheorem{corollary}[theorem]{Corollary}
\newtheorem{definition}[theorem]{Definition}
\theoremstyle{remark}
\newtheorem{remark}[theorem]{Remark}
\newtheorem{example}[theorem]{Example}
\begin{document}

 \title[Integration on ray class groups]{P-adic integration on ray class groups and non-ordinary p-adic L-functions}
 \author{David Loeffler}
 \address{Mathematics Institute, University of Warwick, Coventry CV4 7AL, UK} \email{d.a.loeffler@warwick.ac.uk}

\thanks{Supported by a Royal Society University Research Fellowship.}

 \begin{abstract}
 We study the theory of $p$-adic finite-order functions and distributions on ray class groups of number fields, and apply this to the construction of (possibly unbounded) $p$-adic $L$-functions for automorphic forms on $\GL_2$ which may be non-ordinary at the primes above $p$. As a consequence, we obtain a ``plus-minus'' decomposition of the $p$-adic $L$-functions of automorphic forms for $\GL_2$ over an imaginary quadratic field with $p$ split and Hecke eigenvalues 0 at the primes above $p$, confirming a conjecture of B.D.~Kim.
 \end{abstract}

\maketitle

 \section{Introduction}

  $P$-adic $L$-functions attached to various classes of automorphic forms over number fields are an important object of study in Iwasawa theory. In most cases, these $p$-adic $L$-functions are constructed by interpolating the algebraic parts of critical values of classical (complex-analytic) $L$-functions of twists of the automorphic form by finite-order Hecke characters of the number field.

  When the underlying automorphic form is ordinary at $p$, or when the number field is $\QQ$, this $p$-adic interpolation is very well-understood. However, the case of non-ordinary automorphic forms for number fields $K \ne \QQ$ has received comparatively little study. This paper grew out of the author's attempts to understand the work of Kim \cite{kim-preprint}, who considered the $L$-functions of weight 2 elliptic modular forms over imaginary quadratic fields.

  In this paper, we develop a systematic theory of finite-order distributions and $p$-adic interpolation on ray class groups of number fields. Developing such a theory is a somewhat delicate issue, and appears to have been the subject of several mistakes in the literature to date; we hope this paper will go some way towards resolving the confusion. We then apply this theory to the study of $p$-adic $L$-functions for automorphic representations of $\GL_2$ over a general number field, extending the work of Shai Haran \cite{haran87} in the ordinary case. Largely for simplicity we assume the automorphic representations have trivial central character and lowest cohomological infinity-type (as in the case of representations attached to modular elliptic curves over $K$).

  Applying our theory to the case of $K$ imaginary quadratic with $p$ split in $K$, we obtain proofs of two conjectures. Firstly, we prove a special case of a conjecture advanced by the author and Zerbes in \cite{loefflerzerbes11}, confirming the existence of two ``extra'' $p$-adic $L$-functions whose existence was predicted on the basis of general conjectures on Euler systems. Secondly, we consider the case when the Hecke eigenvalues at both primes above $p$ are 0 (the ``most supersingular'' case); in this case Kim has predicted a decomposition of the $L$-functions analogous to the ``signed $L$-functions'' of Pollack \cite{pollack03} for modular forms. Using the additional information arising from our two extra $L$-functions, we prove Kim's conjecture, constructing four bounded $L$-functions depending on a choice of sign at each of the two primes above $p$.

  \subsection*{Acknlowedgements} I would like to thank B.D.~Kim for sending me an early draft of his paper \cite{kim-preprint}, together with many informative remarks on its contents. I am also grateful to Matthew Emerton, G\"unter Harder, Mladen Dimitrov and Sarah Zerbes for interesting and helpful conversations on this topic; and to the anonymous referee for several valuable comments and corrections.

 \section{Integration on $p$-adic groups}
  \label{sect:padicanalysis}

  In this section, we shall recall and slightly generalize some results concerning the space of locally analytic distributions on an abelian $p$-adic analytic group (the dual space of locally analytic functions); in particular, we are interested in when it is possible to uniquely interpolate a linear functional on locally constant functions by a locally analytic distribution satisfying some growth property.

  In this section we fix a prime $p$ and a coefficient field $E$, which will be a complete discretely-valued subfield of $\CC_p$, endowed with the valuation $v_p$ such that $v_p(p) = 1$.

  \subsection{One-variable theory}

   We first recall the well-known theory of finite-order functions and distributions on the group $\Zp$.

   For $r \in \RR_{\ge 0}$, we define the space of \emph{order $r$ functions} $C^r(\Zp, E)$ as the space of functions $\Zp \to E$ admitting a local Taylor expansion of degree $\lfloor r \rfloor$ at every point with error term $o(z^r)$, cf.~\cite[\S I.5]{colmez10}. We write $D^r(\Zp, E)$, the space of \emph{order $r$ distributions}, for the dual of $C^r(\Zp, E)$ (the space of linear functionals $C^r(\Zp, E) \to E$ which are continuous with respect to the Banach space topology of $C^r(\Zp, E)$; see \emph{op.cit.}~for the definition of this topology).

   We may regard the space $C^{\la}(\Zp, E)$ of locally analytic $E$-valued functions on $\Zp$ as a dense subspace of $C^r(\Zp, E)$ for any $r$, so dually all of the spaces $D^{r}(\Zp, E)$ may be regarded as subspaces of the $E$-algebra $D^{\la}(\Zp, E)$ of \emph{locally analytic distributions}, the dual of $C^{\la}(\Zp, E)$. It is well known that $D^{\la}(\Zp, E)$ can be interpreted as the algebra of functions on a rigid-analytic space over $E$ (isomorphic to the open unit disc), whose points parametrize continuous characters of $\Zp$.

   It is clear that the above definitions extend naturally if $\Zp$ is replaced by any abelian $p$-adic analytic group of dimension 1, since any such group has a finite-index open subgroup $H$ isomorphic to $\Zp$; we say a function, distribution, etc has a given property if its pullback under the map $\Zp \cong a \cdot H \subseteq G$ has it for every $a \in G / H$.

   We define a \emph{locally constant distribution} to be a linear functional on the space $LC(G, E)$ of locally constant functions on $\Zp$. This is clearly equivalent to the data of a finitely-additive $E$-valued function on open subsets of $G$, i.e.~a ``distribution'' in the sense\footnote{To avoid confusion we shall not use the term ``distribution'' alone in this paper, but rather in company with an adjectival phrase, such as ``locally analytic'', and generally an ``$X$ distribution'' shall mean ``a continuous linear functional on the space of $X$ functions'' (with respect to some topology to be understood from the context).} of some textbooks such as \cite{washington97}.

   \begin{definition}
    \label{def:1vargrowth}
    Let $G$ be an abelian $p$-adic analytic group of dimension 1, and let $\mu$ be an $E$-valued locally constant distribution on $G$. We say $\mu$ has \emph{growth bounded by $r$} if there exists $C \in \RR$ such that
    \[ \inf_{a \in G} v_p\, \mu\left(1_{a + p^m H} \right) \ge C - r m\]
    for all $m \ge 0$, where $H$ is some choice of subgroup of $G$ isomorphic to $\Zp$.
   \end{definition}

   \begin{remark}
    The definition is independent of the choice of $H$, although the constant $C$ may not be so.
   \end{remark}

   \begin{theorem}[Amice--V\'elu, Vishik]
    \label{thm:avv}
     Let $G$ be an abelian $p$-adic Lie group of dimension 1.
     \begin{enumerate}[(i)]
      \item If $\mu \in D^r(G, E)$, then the restriction of $\mu$ to $LC(G, E)$ has growth bounded by $r$.
      \item Conversely, if $r < 1$ and $\mu$ is a locally constant distribution on $G$ with growth bounded by $r$, there exists a unique element $\tilde\mu \in D^r(G, E)$ whose restriction to $LC(G, E)$ is $\mu$.
     \end{enumerate}
   \end{theorem}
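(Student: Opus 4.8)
The plan is to reduce at once to the case $G = \Zp$ and then argue the two parts separately, using throughout Colmez's description of $C^r(\Zp,E)$ and its norm $\|\cdot\|_{C^r}$ from \cite[\S I.5]{colmez10}. For general one-dimensional $G$ one fixes $H \cong \Zp$ of finite index, writes $G = \bigsqcup_{a \in G/H}(a+H)$, and observes that both ``$\mu \in D^r(G,E)$'' and the growth condition of Definition~\ref{def:1vargrowth} are tested coset-by-coset and (for the truth of the statement, if not for the value of $C$) are insensitive to the choice of $H$; so it suffices to treat $G = \Zp$. The only inputs I need about the norm are two elementary facts, both obtained by unwinding the definition of the moduli $\varepsilon_r(-,n)$. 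First, $1_{a+p^m\Zp} \in C^r(\Zp,E)$ with $\|1_{a+p^m\Zp}\|_{C^r} \asymp p^{rm}$, the implied constant depending only on $r$, since only the scales $n < m$ contribute. Second, for $r<1$ the $C^r$-norm dominates the Hölder-$r$ seminorm: $|g(x)-g(y)| \le \|g\|_{C^r}\,|x-y|^r$ for all $g \in C^r(\Zp,E)$ and all $x,y \in \Zp$.

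Part (i) is then immediate: $1_{a+p^m\Zp}$ is locally constant, hence lies in $C^r(\Zp,E)$, and $|\mu(1_{a+p^m\Zp})| \le \|\mu\|_{D^r}\,\|1_{a+p^m\Zp}\|_{C^r} \le c_r\|\mu\|_{D^r}\, p^{rm}$; passing to valuations gives a bound of the shape $v_p\,\mu(1_{a+p^m\Zp}) \ge C - rm$ with $C$ independent of $a$ and $m$, which is exactly Definition~\ref{def:1vargrowth}.

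For part (ii) the hypothesis $r<1$ enters precisely to guarantee that $LC(\Zp,E)$ is dense in $C^r(\Zp,E)$: locally polynomial functions of degree $\le \lfloor r\rfloor$ are dense (see \cite[\S I.5]{colmez10}), and for $r<1$ this means locally constant functions. Granting this, uniqueness of $\tilde\mu$ is automatic, and existence reduces to bounding $\mu$ on $LC(\Zp,E)$ with respect to $\|\cdot\|_{C^r}$. So fix $f \in LC(\Zp,E)$, say constant on cosets of $p^m\Zp$, and for $0 \le j \le m$ let $f_j$ be the locally constant function whose value on each coset of $p^j\Zp$ equals the value of $f$ at the least non-negative representative of that coset; then $f_m = f$, so $f = f_0 + \sum_{j=1}^m(f_j - f_{j-1})$. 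Each $f_j - f_{j-1}$ is a combination $\sum_a d_a\,1_{a+p^j\Zp}$ in which every coefficient is of the form $f(x)-f(y)$ with $|x-y| \le p^{-(j-1)}$, whence $|d_a| \le p^{-(j-1)r}\|f\|_{C^r}$ by the Hölder bound; combining this with the growth estimate $|\mu(1_{a+p^j\Zp})| \le p^{rj - C}$ and the ultrametric inequality gives $|\mu(f_j - f_{j-1})| \le p^{r - C}\|f\|_{C^r}$, a bound \emph{independent of $j$}. Since $\mu(f_0) = f(0)\,\mu(1_{\Zp})$ obeys the same bound, we conclude $|\mu(f)| \le p^{r-C}\|f\|_{C^r}$, and $\mu$ extends by continuity to the required $\tilde\mu \in D^r(\Zp,E)$.

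The argument is classical — this one-variable statement is due to Amice--V\'elu and to Vishik — and its only real content is the cancellation in part (ii) between the factor $p^{rj}$ coming from the growth bound and the factor $p^{-(j-1)r}$ coming from Hölder continuity, which is exactly what makes the per-scale estimate uniform in $j$. The remaining ingredients are the two elementary facts about $C^r(\Zp,E)$ recorded at the outset and the density of $LC(\Zp,E)$ in $C^r(\Zp,E)$; this density is the one place where $r<1$ is essential, and for $r\ge 1$ the statement genuinely fails, since the ``derivative data'' of an order-$r$ distribution cannot be seen on locally constant functions. The more delicate work lies not in this theorem but in the several-variable and ray-class-group generalizations that the rest of the paper requires.
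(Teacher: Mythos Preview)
Your argument is correct. The paper, however, does not prove this statement at all: its entire proof is the single sentence ``This is a special case of \cite[Th\'eor\`eme II.3.2]{colmez10}.'' So there is no proof in the paper to compare against beyond the implicit content of the cited reference.

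That said, it is worth noting that the approach underlying Colmez's theorem (and the paper's own multi-variable generalization in \S\ref{sect:severalvars1}) is somewhat different from yours. Colmez works via an explicit Banach basis of $C^r(\Zp,E)$ built from rescaled indicator functions (the ``wavelet basis'' $p^{\lfloor r\ell(i)\rfloor} 1_{i + p^{\ell(i)}\Zp}$), reducing the extension problem to checking boundedness on basis vectors. Your route is instead a direct telescoping argument: decompose $f = f_0 + \sum_j (f_j - f_{j-1})$ at successive scales and balance the growth bound $p^{rj}$ against the H\"older decay $p^{-(j-1)r}$ of the coefficients. Both are classical and both work; yours is perhaps more transparent about \emph{why} $r<1$ is the threshold (the per-scale bound is uniform precisely because the two exponents cancel), while the basis approach generalizes more mechanically to $\Zp^d$ and to the quasi-factorized setting of \S\ref{sect:quasifact}, which is why the paper favours it there.
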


   \begin{proof}
    This is a special case of \cite[Theor\`eme II.3.2]{colmez10}.
   \end{proof}

   \begin{remark}
    In the case $r \ge 1$, one can check that any locally constant distribution $\mu$ with growth bounded by $r$ can be extended to an element $\tilde\mu \in  D^r(G, E)$, but this is only uniquely defined modulo $\ell_0 D^{r - 1}(G, E)$, where $\ell_0$ is the order 1 distribution whose action on $C^1$ functions is $f \mapsto f'(0)$.
   \end{remark}

  \subsection{The case of several variables}
   \label{sect:severalvars1}

   We now consider the case of functions of several variables. First we consider the group $G = \Zp^d$, for some integer $d \ge 1$. Let $r$ be a fixed real number; for simplicity, we shall assume that $r < 1$.

   \begin{definition}
    Let $d \ge 1$ and let $f: G \to E$ be any continuous function. For $m \ge 0$ define the quantity $\varepsilon_m(f)$ by
    \[ \varepsilon_m(f) = \inf_{x \in G, y \in p^m G} v_p\left[f(x + y) - f(x)\right].\]
    We say $f$ has \emph{order $r$} if $\varepsilon_m(f) - rm \to \infty$ as $m \to \infty$.
   \end{definition}

   The space $C^r(G, E)$ of functions with this property is evidently an $E$-Banach space with the valuation
   \[ v_{C^r}(f) = \inf\left( \inf_{x \in G} v_p(f(x)), \inf_{m \ge 0} (\varepsilon_m(f) - rm) \right).\]

   \begin{remark}
    For $d = 1$ this reduces to the definition of the valuation denoted by $v'_{C^r}$ in \cite{colmez10}; in \emph{op.cit.} the notation $v_{C^r}$ is used for another slightly different valuation on $C^r(\Zp, E)$, which is not equal to $v'_{C^r}$ but induces the same topology.
   \end{remark}

   If we define, for any $f \in C^r(G, E)$, a sequence of functions $(f_m)_{m \ge 0}$ by
   \[ f_m = \sum_{j_1 = 0}^{p^{m} - 1}\dots \sum_{j_d = 0}^{p^{m} - 1} f(j_1, \dots, j_d) 1_{(j_1, \dots, j_d) + p^m G},\]
   then it is clear from the definition of the valuation $v_{C^r}$ that we have $f_m \to f(x)$ in the topology of $C^r(G, E)$, so in particular the space $LC(G, E)$ of locally constant $E$-valued functions on $G$ is dense in $C^r(G, E)$.

   We define a space $D^r(G, E)$ as the dual of $C^r(G, E)$, as before.

   As in the case $d = 1$ all of these constructions are clearly local on $G$ and thus extend\footnote{A little care is required here since for $k \ge 1$ pullback along the inclusion $\Zp \stackrel{\times p^k}{\longrightarrow} \Zp$ is a continuous map $C^r(\Zp, E) \to C^r(\Zp, E)$ but \emph{not} an isometry if $r \ne 0$. So for a general $G$ the space $C^r(G, E)$ has a canonical topology, but not a canonical valuation inducing this topology.} to abelian groups having an open subgroup isomorphic to $\Zp^d$. Since any abelian $p$-adic analytic group has this property for some $d \ge 0$, we obtain well-defined spaces $C^r(G, E)$ and dually $D^r(G, E)$ for any such group $G$.

   \begin{remark}
    This includes the case where $G$ is finite, so $d = 0$; in this case we clearly have have $C^r(G, E) = C^{\la}(G, E) = \operatorname{Maps}(G, E)$ and $D^r(G, E) = D^{\la}(G, E) = E[G]$ for any $r \ge 0$.
   \end{remark}

   We now consider the problem of interpolating locally constant distributions by order $r$ distributions.

   \begin{definition}
    \label{def:severalvarsgrowth}
    Let $\mu$ be a locally constant distribution on $G$. We say that $\mu$ has \emph{growth bounded by $r$} if there is a constant $C$ and an open subgroup $H$ of $G$ isomorphic to $\Zp^d$ such that
    \[ \inf_{a \in G} v_p \mu\left(1_{a + p^m H} \right) \ge C - rm \]
    for all $m \ge 0$.
   \end{definition}

   The natural analogue of the theorem of Amice--V\'elu--Vishik is the following.

   \begin{theorem}
    Let $G$ be an abelian $p$-adic analytic group, $r \in [0, 1)$, and $\mu$ a locally constant distribution on $G$ with growth bounded by $r$. Then there is a unique element $\tilde\mu \in D^r(G, E)$ whose restriction to $LC(\Zp, E)$ is $\mu$.
   \end{theorem}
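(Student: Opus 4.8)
The plan is to reduce to the case $G = \Zp^d$ and there to play the growth bound on $\mu$ off against the valuation $v_{C^r}$, extending by density. Everything in sight is local on $G$: fixing an open subgroup $H \cong \Zp^d$ witnessing the growth bound and coset representatives $a_1, \dots, a_n$ of the finite quotient $G/H$, both $LC(G, E)$ and $C^r(G, E)$ decompose as finite direct sums $\bigoplus_{i} LC(a_i + H, E)$ and $\bigoplus_i C^r(a_i + H, E)$, topologically and compatibly, whence $D^r(G, E) = \bigoplus_i D^r(a_i + H, E)$ and $\mu = \bigoplus_i \mu_i$ with each $\mu_i$ a locally constant distribution on the coset $a_i + H$. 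Translating $a_i + H$ to $\Zp^d$ via $H \cong \Zp^d$ (which carries $p^m H$ to $p^m \Zp^d$), Definition~\ref{def:severalvarsgrowth} applied to $\mu$ shows at once that each $\mu_i$ has growth bounded by $r$, with the same constant $C$, and it suffices to show that each $\mu_i$ has a unique extension to an element $\tilde\mu_i \in D^r(\Zp^d, E)$ restricting to $\mu_i$ on $LC(\Zp^d, E)$. So from now on assume $G = \Zp^d$.

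For $G = \Zp^d$, uniqueness is immediate: $LC(\Zp^d, E)$ is dense in $C^r(\Zp^d, E)$ (established above, using $r < 1$), so a continuous functional on $C^r$ is determined by its restriction to $LC$. For existence it is therefore enough to prove that $\mu$, as a functional on $LC(\Zp^d, E)$, is \emph{bounded} for $v_{C^r}$, i.e. that there is a constant $C'$ with $v_p(\mu(f)) \ge v_{C^r}(f) + C'$ for all $f \in LC(\Zp^d, E)$; such a $\mu$ then extends uniquely by continuity to $\tilde\mu \in D^r(\Zp^d, E)$, which restricts to $\mu$ on the dense subspace $LC(\Zp^d, E)$.

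The key step — and, I expect, the only real content — is this boundedness estimate, which I would prove by a telescoping argument. Given $f \in LC(\Zp^d, E)$, pick $m$ with $f$ constant on cosets of $p^m \Zp^d$, and for $0 \le k \le m$ set
\[ f_k = \sum_{0 \le j_1, \dots, j_d < p^k} f(j_1, \dots, j_d)\, 1_{(j_1, \dots, j_d) + p^k \Zp^d},\]
so that $f_0 = f(0)\, 1_{\Zp^d}$, $f_m = f$, and each $f_k$ is constant on cosets of $p^k \Zp^d$. For $0 \le k < m$ the difference $f_{k+1} - f_k$ is constant on cosets of $p^{k+1} \Zp^d$, and on any such coset its value has the form $f(x') - f(x)$ with $x \equiv x' \pmod{p^k}$, hence valuation at least $\varepsilon_k(f) \ge v_{C^r}(f) + rk$ by definition of $v_{C^r}$. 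Combining this with the growth bound $v_p\, \mu(1_{b + p^{k+1}\Zp^d}) \ge C - r(k+1)$ and the ultrametric inequality gives
\[ v_p\bigl( \mu(f_{k+1} - f_k) \bigr) \ge \bigl(v_{C^r}(f) + rk\bigr) + \bigl(C - r(k+1)\bigr) = v_{C^r}(f) + C - r,\]
while $v_p(\mu(f_0)) \ge v_{C^r}(f) + C$ directly from the $m = 0$ case of the growth bound. Since $\mu(f) = \mu(f_0) + \sum_{k=0}^{m-1} \mu(f_{k+1} - f_k)$, one more application of the ultrametric inequality yields $v_p(\mu(f)) \ge v_{C^r}(f) + C - r$ (using $r \ge 0$), the desired bound with $C' = C - r$. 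This completes the proof.

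The point of the telescoping — and the reason a one-step bound fails — is that the naive estimate $v_p(\mu(f)) \ge v_{C^r}(f) + C - rm$ obtained by expanding $f$ directly at level $m$ degrades without bound as the level of $f$ grows; splitting $f$ into the layers $f_{k+1} - f_k$ works precisely because the decay of the layer coefficients, controlled by $\varepsilon_k(f)$ and hence by $rk$, offsets the growth $-r(k+1)$ of $\mu$, leaving a level-independent bound that the ultrametric inequality then allows us to sum over arbitrarily many layers at no cost. The hypothesis $r < 1$ is used only, but essentially, to guarantee that $LC$ is dense in $C^r$ — which fails for $r \ge 1$, cf.\ the remark after Theorem~\ref{thm:avv} — so that the extension exists and is unique; note that the one-variable Theorem~\ref{thm:avv} itself is not actually invoked.
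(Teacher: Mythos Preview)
Your argument is correct, but it proceeds differently from the paper's. The paper reduces to $G = \Zp^d$ as you do, but then constructs an explicit Banach basis of $C^r(\Zp^d, E)$: the rescaled indicators $e_{(i_1,\dots,i_d),r} = p^{\sup_j \lfloor r \ell(i_j)\rfloor} 1_{S(i_1,\dots,i_d)}$, a $d$-variable version of Colmez's wavelet basis. The growth bound on $\mu$ is then exactly what is needed for $\mu$ to take bounded values on this basis, and a bounded linear functional on a Banach basis extends uniquely to the completion. You instead bypass the basis entirely and prove directly that $\mu$ is $v_{C^r}$-bounded on $LC$, via the telescoping decomposition $f = f_0 + \sum_{k=0}^{m-1}(f_{k+1}-f_k)$ together with the layer estimate $v_p((f_{k+1}-f_k)|_{\text{coset}}) \ge \varepsilon_k(f) \ge v_{C^r}(f) + rk$. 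Your approach is more self-contained and avoids the (only sketched) verification that the $e_{(i_1,\dots,i_d),r}$ really form a Banach basis; the paper's approach, on the other hand, yields that basis as a by-product, which is reused later (e.g.\ in \S\ref{sect:quasifact} and in the Mahler--Amice computation of \S3.5). The two are closely related under the hood: your layers $f_{k+1}-f_k$ are precisely the span of the wavelets at depth $k+1$.
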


   \begin{proof}
    The proof of this result in the general case is very similar to the case $d = 1$, so we shall give only a brief sketch. Clearly one may assume $G = H = \Zp^d$. One first shows that the space of locally constant functions on $\Zp^d$ has a ``wavelet basis'' consisting of the indicator functions of the sets
    \[ S(i_1, \dots, i_d) = (i_1 + p^{\ell(i_1)} \Zp) \times \dots \times (i_d + p^{\ell(i_d)} \Zp)\]
    for $i_1, \dots, i_d \ge 0$, where $\ell(n)$ is as defined in \cite[\S 1.3]{colmez10}. One then checks that the functions
    \[ e_{(i_1, \dots, i_d), r} := p^{\sup_j \left\lfloor r \ell(i_j) \right\rfloor} 1_{S(i_1, \dots, i_d)}\]
    form a Banach basis of $C^r(\Zp^d, E)$, so a linear functional on locally constant functions taking bounded values on the $e_{(i_1, \dots, i_d), r}$ extends uniquely to the whole of $C^r(\Zp^d, E)$.
   \end{proof}

   \begin{remark}
    The case when $G = \cO_L$, for some finite extension $L / \Qp$, has been studied independently by De Ieso \cite{deieso13}, who shows the following more general result: a locally polynomial distribution of degree $k$ which has growth of order $r$ in a sense generalizing Definition \ref{def:severalvarsgrowth} admits a unique extension to an element of $D^r(G, E)$ if (and only if) $r < k + 1$. This reduces to the theorem above for $k = 0$.
   \end{remark}

  \subsection{Groups with a quasi-factorization}
   \label{sect:quasifact}

   The constructions we have used for $C^r$ and $D^r$ seem to be essentially the only sensible approaches to defining such spaces which are functorial with respect to arbitrary automorphisms of $G$. However, if one assumes a little more structure on the group $G$ then there are other possibilities, which allow a greater range of locally constant distributions to be $p$-adically interpolated.

   \begin{definition}
    Let $G$ be an abelian $p$-adic analytic group, and let $\cG$ be the Lie algebra of $G$. A \emph{quasi-factorization} of $G$ is a decomposition $\cG = \bigoplus \cH_i$ of $\cG$ as a direct sum of subspaces.
   \end{definition}

   If $(\cH_i)$ is a quasi-factorization of $G$, we can clearly choose (non-uniquely) closed subgroups $H_i \subseteq G$ such that $\cH_i$ is the Lie algebra of $H_i$ and $\prod H_i$ is an open subgroup of $G$. We shall say that $(H_i)$ are subgroups compatible with the quasi-factorization $(\cH_i)$.

   \begin{definition}
    Let $G$ be an abelian $p$-adic analytic group with a quasi-fact\-orization $\cH = (\cH_1, \dots, \cH_g)$, and let $(H_1, \dots, H_g)$ be subgroups compatible with $\cH$. Let $f: G \to E$ be continuous. Define
    \[ \varepsilon_{m_1, \dots, m_g}(f) = \inf_{\substack{x \in G \\ y \in p^{m_1} H_1 \times \dots \times p^{m_g} H_g}} v_p\left[ f(x + y) - f(x) \right].\]

    We say $f$ has \emph{order $(r_1, \dots, r_g)$}, where $r_i \in [0, 1)$, if we have
    \begin{equation}\label{eq:qfactbound}
     \varepsilon_{m_1, \dots, m_g}(f) - (r_1 m_1 + \dots + r_g m_g) \to \infty
    \end{equation}
    as $(m_1, \dots, m_g) \to \infty$ (with respect to the filter of cofinite subsets of $\mathbf{N}^g$, i.e.~for any $N$ there are finitely many $g$-tuples $(m_1, \dots, m_g)$ such that the above expression is $\le N$)
   \end{definition}

   We define $C^{(r_1, \dots, r_g)}(G, E)$ to be the space of functions of order $(r_1, \dots, r_g)$, equip\-ped with the valuation given by
   \[ v_{(r_1, \dots, r_g)}(f) = \inf\left[\inf_{x \in G} v_p(f(x)), \inf_{(m_1, \dots, m_n)}\left( \varepsilon_{m_1, \dots, m_g}(f) - (r_1 m_1 + \dots + r_g m_g) \right)\right],\]
   which makes it into an $E$-Banach space. We write $D^{(r_1, \dots, r_g)}(G, E)$ for its dual. It is clear that as topological vector spaces these do not depend on the choice of the $(H_i)$.

   \begin{remark}
    One can check that if $G = H_1 \times \dots \times H_g$, then $C^{(r_1, \dots, r_g)}(G, E)$ is isomorphic to the completed tensor product
    \[ \mathop{\widehat\bigotimes}_{1 \le i \le g} C^{r_i}(H_i, E).\]
   \end{remark}

   The spaces $C^{(r_1, \dots, r_g)}(G, E)$ are clearly invariant under automorphisms of $G$ preserving the quasi-factori\-zation $\cH$.

   \begin{definition}
    \label{def:quasifactgrowth}
    Let $G$ be an abelian $p$-adic analytic group with a quasi-factor\-ization $\cH = (\cH_1, \dots, \cH_g)$, and let $(H_1, \dots, H_g)$ be subgroups compatible with $\cH$. Let $\mu$ be a locally constant distribution on $G$.

    We say $\mu$ has \emph{growth bounded by $(r_1, \dots, r_d)$} if the following condition holds: there is a constant $C$ such that for all $m_1, \dots, m_g \in \ZZ_{\ge 0}$, we have
    \[ \inf_{a \in G} v_p \mu\left(1_{a + (p^{m_1} H_1) \times \dots \times (p^{m_g} H_g)}\right) \ge C - (r_1 m_1 + \dots + r_g m_g).\]
   \end{definition}

   It is clear that whether or not $\mu$ has growth bounded by $(r_i)$ does not depend on the choice of $(H_i)$, although the constant $C$ will so depend.

   \begin{theorem}
    \label{thm:quasifactthm}
    Suppose $\mu$ is a locally constant distribution with growth bounded by $(r_1, \dots, r_g)$, where $r_i \in [0, 1)$. Then there is a unique extension of $\mu$ to an element \[ \tilde\mu \in D^{(r_1, \dots, r_g)}(G, E).\]
   \end{theorem}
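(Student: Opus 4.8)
The plan is to mimic the proof of the single-variable case (Theorem~\ref{thm:avv}) and of the $G = \Zp^d$ case above, by exhibiting an explicit Banach basis of $C^{(r_1, \dots, r_g)}(G, E)$ consisting of suitably-renormalized indicator functions, and then checking that the growth hypothesis on $\mu$ is exactly the statement that $\mu$ is bounded on this basis. First I would reduce to $G = H_1 \times \dots \times H_g$ with each $H_i \cong \Zp^{d_i}$; by the remark following the definition of $C^{(r_1,\dots,r_g)}$ we then have $C^{(r_1,\dots,r_g)}(G,E) \cong \widehat{\bigotimes}_i C^{r_i}(H_i, E)$, and since each $H_i \cong \Zp^{d_i}$ I can invoke the multidimensional wavelet basis $\{e_{(i_1,\dots,i_{d_i}),r_i}\}$ constructed in the proof of the preceding theorem for each factor. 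The tensor products of these bases then form a Banach basis of the completed tensor product, hence of $C^{(r_1,\dots,r_g)}(G,E)$; concretely the basis elements are products $\prod_i e_{\underline{i}^{(i)}, r_i}$, which are scalar multiples $p^{\sum_i \lfloor r_i \ell(\cdot) \rfloor}$ of indicator functions of products of balls of the form $\prod_i S^{(i)}$.

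Granting this basis, the proof is then formal: a linear functional on $LC(G,E)$ extends (uniquely, since $LC$ is dense) to a continuous functional on $C^{(r_1,\dots,r_g)}(G,E)$ if and only if it is bounded on the basis elements, i.e.\ $v_p\bigl(\mu(e_{\underline b})\bigr) \ge -C'$ for some constant $C'$, uniformly over all basis elements $e_{\underline b}$. Unwinding the normalization, $e_{\underline b} = p^{\sum_i \lfloor r_i \ell_i \rfloor} 1_{B}$ where $B = \prod_i B_i$ is a product of balls and $B_i$ has radius governed by $\ell_i = \ell(\cdot)$; thus boundedness on the basis says $v_p\,\mu(1_B) \ge -C' - \sum_i \lfloor r_i \ell_i\rfloor \ge -C' - \sum_i r_i \ell_i$. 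Comparing with Definition~\ref{def:quasifactgrowth}: a ball $a + (p^{m_1}H_1) \times \dots \times (p^{m_g}H_g)$ is exactly such a product $B$ with the $i$-th side a ball in $H_i \cong \Zp^{d_i}$ of "depth $m_i$", and every product-of-wavelet-balls $B$ arises this way up to the bounded discrepancy between $\ell(\cdot)$ and the actual depth; so the two conditions match up to adjusting the constant. Hence growth bounded by $(r_1,\dots,r_g)$ is equivalent to boundedness on the basis, giving existence and uniqueness of $\tilde\mu$.

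The main obstacle I expect is the first step: verifying that the tensor product of the one-factor wavelet bases really is an orthonormal (Banach) basis of the \emph{completed} tensor product $\widehat{\bigotimes}_i C^{r_i}(H_i,E)$ with respect to the valuation $v_{(r_1,\dots,r_g)}$, rather than merely a topological basis. This requires knowing that $v_{(r_1,\dots,r_g)}$ restricted to finite linear combinations of the product-wavelets computes as the expected infimum (a "tensor product of orthonormal bases is orthonormal" statement for non-archimedean Banach spaces), which is where the precise choice of valuation and the $\sup_j$ rather than $\sum_j$ in the exponent of $e_{(i_1,\dots,i_d),r}$ genuinely matters; one must check the $\varepsilon_{m_1,\dots,m_g}$-defined valuation on a product ball of mixed radii is $\sum_i \lfloor r_i \ell_i\rfloor$ up to $O(1)$, which amounts to a direct computation with the finite differences $f(x+y)-f(x)$ for $y$ ranging over the relevant product of subgroups. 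I would carry this out only for the case $d_i = 1$ in detail and remark that the general case follows by combining it with the $\Zp^d$ wavelet computation already sketched, keeping the exposition at the level of a sketch as in the surrounding results.
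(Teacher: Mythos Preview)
Your proposal is correct and follows essentially the same approach as the paper's proof: reduce to the product case $G = H_1 \times \dots \times H_g$, use the completed tensor product isomorphism $C^{(r_1,\dots,r_g)}(G,E) \cong \widehat{\bigotimes}_i C^{r_i}(H_i,E)$ to obtain an explicit Banach basis from the per-factor wavelet bases, and conclude. The paper's argument is in fact considerably terser than yours---it simply asserts that the tensor product isomorphism ``gives an explicit Banach space basis'' and declares the result clear---so your more careful discussion of the orthonormality of the product basis and the matching of the growth condition with boundedness on basis elements fills in details the paper leaves to the reader.
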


   \begin{proof}
    As usual it suffices to assume that $G = H_1 \times \dots \times H_g$. Then the isomorphism
    \[ C^{(r_1, \dots, r_g)}(G, E) \cong \mathop{\widehat\bigotimes}_{1 \le i \le g} C^{r_i}(H_i, E)\]
    gives an explicit Banach space basis of $C^{(r_1, \dots, r_g)}(G, E)$, and the result is now clear.
   \end{proof}

   \begin{remark}
    The special case of Theorem \ref{thm:quasifactthm} where $G = \Zp^2$, with its natural quasi-factorization, is studied in \S 7.1 of \cite{kim-preprint}, which was the inspiration for many of the results of this paper.
   \end{remark}

 \section{Distributions on ray class groups}

  We now consider the application of the above machinery to global settings. We give a fairly general formulation, in the hope that this theory will be useful in contexts other than those we consider in the sections below; sadly, this comes at the cost of somewhat cumbersome notation.

  Let $K$ be a number field. As usual, we define a ``modulus'' of $K$ to be a finite formal product $\prod_{i = 1}^N v_i^{n_i}$ where each $v_i$ is either a finite prime of $K$ or a real place of $K$, and $n_i \in \ZZ_{\ge 0}$, with $n_i \le 1$ if $v_i$ is infinite. We define a ``pseudo-modulus'' to be a similar formal product but with some of the $n_i$ at finite places allowed to be $\infty$. If $\fF$ is a pseudo-modulus, then the ray class group of $K$ modulo $\fF$ is defined, and we denote this group by $G_{\fF}$.

  Let $\fF$ be a pseudo-modulus of $K$, and let $\Sigma$ be the finite set of primes $\fp$ of $K$ such that $\fp^\infty \mid \fF$.

  Let $F$ be a finite extension of either $\QQ$ or of $\Qp$ for some prime $p$, and $V$ a finite-dimensional $F$-vector space.

  \begin{definition}
   A \emph{growth parameter} is a family $\delta = (\delta_{\fp})_{\fp \in \Sigma}$ of non-zero elements of $\cO_F$ indexed by the primes in $\Sigma$.
  \end{definition}

  \begin{definition}
   A \emph{ray class distribution} modulo $\fF$ with growth parameter $\delta$ and values in $V$ to be the data of, for each modulus $\ff$ dividing $\fF$, an element
   \[ \Phi_{\ff} \in F[G_{\ff}] \otimes_{F} V,\]
   such that:
   \begin{enumerate}
    \item \label{cond1} We have $\Norm_{\ff}^{\ff'} \Phi_{\ff'} = \Phi_{\ff}$ for all pairs of moduli $(\ff, \ff')$ with $\ff \mid \ff'$ and $\ff' \mid \fF$.
    \item \label{cond2} There exists an $\cO_F$-lattice $\Lambda \subseteq V$ such that for every $\ff \mid \fF$, we have $\Phi_{\ff} \in \cO_F[G_{\ff}] \otimes_{\cO_F} \delta_{\ff}^{-1} \Lambda$,
    where $\delta_{\ff} = \prod_{\fp \in \Sigma} \delta_{\fp}^{-v_\fp(\ff)}$.
   \end{enumerate}
  \end{definition}

  We shall see in subsequent sections that systems of elements of this kind appear in several contexts as steps in the construction of $p$-adic $L$-functions attached to automorphic forms.

  Since the natural maps $G_{\fF} \to G_{\ff}$ for $\ff \mid \fF$ are surjective, and $G_{\fF}$ is equal to the inverse limit of the finite groups $G_{\ff}$ over moduli $\ff \mid \fF$, condition \eqref{cond1} implies that we may interpret a ray class distribution as a locally constant distribution on $G_{\fF}$ with values in $V$. Condition \eqref{cond2} can then be interpreted as a growth condition on this locally constant distribution. Our goal is to investigate how this interacts with the growth conditions studied in \S \ref{sect:padicanalysis} above.

  If the coefficient field $F$ is a number field, then choosing a prime $\fq$ of $F$ allows us to interpret $V$-valued ray class distributions as $V_{\fq}$-valued ray class distributions, where $V_{\fq} = F_{\fq} \otimes_{F}V$. So we shall assume henceforth that $F = E$ is a finite extension of $\Qp$ for some rational prime $p$, as in \S \ref{sect:padicanalysis}.

  Let us write $\fF = \fg \cdot \prod_{\fp \in \Sigma} \fp^\infty$, for some modulus $\fg$ coprime to $\Sigma$. Then we have an exact sequence
  \[ 0 \to \overline{\cE}_\fg \to \prod_{\fp \in \Sigma} \cO_{K, \fp}^\times \to G_{\fF} \to G_{\fg} \to 0\]
  where $\overline{\cE}_\fg$ is the closure of the image in $\prod_{\fp \in \Sigma} \cO_{K, \fp}^\times$ of the group of units of $\cO_K$ congruent\footnote{We adopt the usual convention in class field theory that if $\fg$ is divisible by a real infinite place $v$, ``congruent to 1 modulo $v$'' means that the image of the unit concerned under the corresponding real embedding of $K$ should be positive.} to 1 modulo $\fg$. Since $G_\fg$ is finite, we see that if $\Sigma$ is a subset of the primes above $p$, then $G_{\fF}$ is a $p$-adic analytic group; and if
  \[ \ff = \prod_{\fp \in \Sigma} \fp^{n_{\fp}}\]
  for some integers $n_{\fp}$, the kernel of the natural surjection $G_{\fF} \rightarrow G_{\ff \fg}$ is the image $H_\ff$ of the subgroup
  \[ U_\ff \coloneqq \prod_{\fp \in \Sigma} (1 + \fp^{n_\fp} \cO_{K, \fp})^\times \subseteq \prod_{\fp \in \Sigma} \cO_{K, \fp}^\times.\]

  The subgroups $H_{\ff}$ for $\ff \mid \Sigma^\infty$ form a basis of neighbourhoods of 1 in $G_{\fF}$, so we may regard a ray class distribution as a locally constant distribution on $G_{\fF}$ satisfying a boundedness condition relative to the subgroups $H_{\ff}$. Our task is to investigate how this interacts with the boundedness conditions considered in \S \ref{sect:padicanalysis} above.

  \subsection{Criteria for distributions of order r}

   The following theorem gives a sufficient condition for a ray class distribution to define a finite-order distribution on the group $G_{\fF}$ in the sense of \S \ref{sect:severalvars1}, when $\Sigma$ is a subset of the of primes above $p$ (so that $G_{\fF}$ is a $p$-adic analytic group). We continue to assume that the coefficient field $E$ is a finite extension of $\Qp$, and (as before) we write $v_p$ for the valuation on $E$ such that $v_p(p) = 1$. We shall assume for simplicity that the coefficient space $V$ is simply $E$; the general case follows immediately from this.

   \begin{theorem}
    \label{thm:rayclassdist1param}
    Let $\Theta$ be a ray class distribution modulo $\fF$. Suppose $\Theta$ has growth parameter $(\alpha_\fp)_{\fp \in \Sigma}$, and define
    \[ r = \sum_{\fp \in \Sigma} e_\fp v_p(\alpha_\fp),\]
    where $e_{\fp}$ is the absolute ramification index of the prime $\fp$. Then $\Theta_\fF$, viewed as a locally constant distribution on $G_{\fF}$, has growth bounded by $r$ in the sense of Definition \ref{def:severalvarsgrowth}.

    In particular, if $r < 1$, there is a unique element $\tilde \Theta \in D^r(G_{\fF}, E)$ extending $\Theta$.
   \end{theorem}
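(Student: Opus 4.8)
The plan is to verify that $\Theta_{\fF}$, viewed as a locally constant distribution on $G_{\fF}$, satisfies the condition of Definition~\ref{def:severalvarsgrowth}, and then to invoke the several-variable analogue of the Amice--V\'elu--Vishik theorem proved in \S\ref{sect:severalvars1}. Since the coefficient space is $E$, I would first fix the $\cO_E$-lattice $\Lambda$ furnished by condition~\eqref{cond2}; rescaling $\Theta$ by a nonzero scalar rescales $\tilde\Theta$ in the same way, so one may assume $\Lambda=\cO_E$. I would then recall from the discussion preceding the theorem that $G_{\fF}$ is a $p$-adic analytic group (here we use that $\Sigma$ consists of primes above $p$), that for a $\Sigma$-modulus $\ff=\prod_{\fp\in\Sigma}\fp^{n_\fp}$ the open subgroup $H_{\ff}=\ker\!\left(G_{\fF}\twoheadrightarrow G_{\ff\fg}\right)$ is the image of $U_{\ff}=\prod_{\fp\in\Sigma}(1+\fp^{n_\fp}\cO_{K,\fp})$, and that the $H_{\ff}$ form a basis of open neighbourhoods of the identity. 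The idea is to use one such $H_{\ff}$, with all $n_\fp$ large, as the subgroup ``$H$'' in Definition~\ref{def:severalvarsgrowth}.

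Concretely, I would pick an integer $N$ that is large (how large to be pinned down momentarily), set $\ff_0=\prod_{\fp\in\Sigma}\fp^{N}$ and $H=H_{\ff_0}$. The crucial input, and the step I expect to be the main obstacle, is the following statement about $p$-adic groups: for $N>e_\fp/(p-1)$ the $p$th-power map restricts to an isomorphism $1+\fp^{n}\cO_{K,\fp}\xrightarrow{\sim}1+\fp^{\,n+e_\fp}\cO_{K,\fp}$ for every $n\ge N$, where $e_\fp=v_\fp(p)$ is the absolute ramification index (via the standard estimate on $(1+x)^p-1$, or via the $\fp$-adic logarithm). Iterating, the $p^m$th-power map carries $U_{\ff_0}$ isomorphically onto $U_{\ff_m}$, where $\ff_m=\prod_{\fp\in\Sigma}\fp^{\,N+m e_\fp}$, so that $v_\fp(\ff_m)=N+m e_\fp$; and since $\prod_{\fp\in\Sigma}\cO_{K,\fp}^\times\to G_{\fF}$ is a group homomorphism, applying it to $p^m$th powers shows that $p^m H=H_{\ff_m}$ inside $G_{\fF}$. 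Enlarging $N$ if necessary I would also arrange that $H$ is torsion-free (the $p^jH$ are cofinal among open subgroups of $G_{\fF}$, which contains a torsion-free open subgroup), so that $H\cong\Zp^{d}$ with $d=\dim G_{\fF}$ and $H$ is a legitimate choice in Definition~\ref{def:severalvarsgrowth}. It is precisely this passage from the power $p^m$ to the congruence level $\ff_m$, with the ramification index entering as the ``shift'' $v_\fp(\ff_m)=N+m e_\fp$, that produces the factors $e_\fp$ in the formula for $r$; the remaining steps are bookkeeping with the axioms of a ray class distribution.

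With $H=H_{\ff_0}$ fixed, I would then bound $v_p\,\Theta_{\fF}(1_{a+p^m H})$ for $a\in G_{\fF}$. Writing $\bar a$ for the image of $a$ in $G_{\ff_m\fg}$, the indicator function $1_{a+p^m H}=1_{a+H_{\ff_m}}$ is the pull-back of $1_{\bar a}$ along $G_{\fF}\twoheadrightarrow G_{\ff_m\fg}$, so by the norm-compatibility condition~\eqref{cond1} --- which is exactly what allows the system $(\Theta_{\ff})$ to be regarded as a single locally constant distribution on $G_{\fF}$ --- the value $\Theta_{\fF}(1_{a+p^m H})$ is the coefficient of $[\bar a]$ in $\Theta_{\ff_m\fg}\in E[G_{\ff_m\fg}]$. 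By condition~\eqref{cond2}, all coefficients of $\Theta_{\ff_m\fg}$ have valuation at least $-\sum_{\fp\in\Sigma}v_\fp(\ff_m)\,v_p(\alpha_\fp)$ (after absorbing the fixed lattice $\Lambda$ into an additive constant, and using $\delta_{\ff_m\fg}=\delta_{\ff_m}$ as $\fg$ is coprime to $\Sigma$); substituting $v_\fp(\ff_m)=N+me_\fp$ gives
$-\sum_{\fp}(N+me_\fp)v_p(\alpha_\fp)=C-rm$, with $C=-N\sum_{\fp}v_p(\alpha_\fp)$ independent of $m$ and of $a$, and $r=\sum_{\fp}e_\fp v_p(\alpha_\fp)$ as in the statement. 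Hence $\inf_{a\in G_{\fF}}v_p\,\Theta_{\fF}(1_{a+p^m H})\ge C-rm$ for all $m\ge0$, which is precisely the assertion that $\Theta_{\fF}$ has growth bounded by $r$ in the sense of Definition~\ref{def:severalvarsgrowth}. Finally, when $r<1$, the several-variable Amice--V\'elu--Vishik theorem of \S\ref{sect:severalvars1}, applied to $G=G_{\fF}$, produces the unique element $\tilde\Theta\in D^r(G_{\fF},E)$ whose restriction to locally constant functions is $\Theta$, which completes the proof.
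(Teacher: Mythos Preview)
Your argument is correct and follows essentially the same route as the paper's own proof: both choose a sufficiently deep level so that the $p$-adic logarithm (equivalently, the estimate on $(1+x)^p$) gives $(1+\fp^{n}\cO_{K,\fp})^{p^m}=1+\fp^{\,n+e_\fp m}\cO_{K,\fp}$, take $H$ to be the image of the corresponding product of principal units (enlarging the level to make $H$ torsion-free), identify $G_{\fF}/H^{p^m}$ with the ray class group of the explicit modulus $\fg\cdot\prod_\fp\fp^{\,c_\fp+e_\fp m}$, and read off the bound $C-rm$ from condition~\eqref{cond2}. The only cosmetic difference is that the paper allows distinct constants $c_i$ at each prime while you use a single $N$, which is immaterial.
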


   \begin{proof}
    Let $\fp\in \Sigma$. We can choose some integer $c \ge 0$ such that the $p$-adic logarithm converges on $U_{\fp, k} = (1 + \fp^{m} \cO_{\fp})^\times \subseteq \cO_{\fp}^\times$ for all $k \ge c$, and identifies $U_{\fp, k}$ with the additive group $\fp^{k} \cO_{\fp}$. In particular, $U_{\fp, c}$ is isomorphic to $\Zp^{[K_{\fp} : \Qp]}$, and for $m \ge 0$ we have
    \[ U_{\fp, c}^{p^m} \cong p^m \fp_i^{c} \cO_{\fp} = \fp^{c + e_\fp m} \cO_{\fp} \cong U_{\fp, c + e_\fp m}.\]

    Now let $\fp_1, \dots, \fp_g$ be the primes in $\Sigma$ and choose a constant $c_i$ for each. Then the subgroup $U = U_{\fp_1, c_1} \times \dots \times U_{\fp_g, c_g}$ is an open subgroup of $(\cO_K \otimes \Zp)^\times = \prod_{i = 1}^g \cO_{\fp_i}^\times$, and $U \cong \Zp^{[K : \QQ]}$. By increasing the $c_i$ if necessary, we may assume that the image of $U$ in $G_{\fF}$ is torsion-free and hence isomorphic to $\Zp^{h}$ for some $h \le [K : \QQ]$.

    Let $H$ be the image of $U$. Then the image of $U^{p^m}$ is $H^{p^m}$, clearly; so a locally constant distribution on $G_\fF$ has order $r$ if and only if there is $C$ such that $v_p \mu( 1_{x \cdot H^{p^m}}) \ge C - rm$ (this is just Definition \ref{def:severalvarsgrowth}, but with the group law on $G_{\fF}$ written multiplicatively).

    However, we have
    \[ U^{p^m} = \prod_{i = 1}^g U_{\fp_i, c_i + e_{\fp_i} m} = \prod_{i = 1}^g (1 + \fp_i^{c_i + e_{\fp_i} m} \cO_{\fp_i})^\times.\]
    Thus $G_{\fF} / H^{p^m}$ is the ray class group of modulus
    \[ \ff_m \coloneqq \fg \cdot \prod_{i = 1}^g \fp_i^{c_i + e_{\fp_i} m},\]
    where $\fg$ is (as above) the modulus such that $\fF = \fg \cdot \prod_{i = 1}^g \fp_i^\infty$.
    So, if $\Theta$ is a ray class distribution with growth parameter $(\alpha_\fp)_{\fp \in \Sigma}$, then we know that the valuation of $\Theta(X)$ where $X$ is any coset of $H^{p^m}$ is bounded below by
    \[ C - \sum_{i = 1}^g v_p(\alpha_{\fp_i}) \cdot \ord_{\fp_i}(\ff_m) = C' - m \sum_{i = 1}^g e_{\fp_i} v_p(\alpha_{\fp_i}) = C' - r m\]
    for some constants $C, C'$, where $r$ is as defined in the statement of the theorem. So a ray class distribution with growth parameter $(\alpha_\fp)_{\fp \in \Sigma}$ defines a locally constant distribution on the $p$-adic analytic group $G_{\fF}$ whose growth is bounded by $r$, as required.
   \end{proof}

   Note that this argument does not depend on the dimension of $G_{\fF}$ (which is useful, since this dimension depends on whether Leopoldt's conjecture holds for $K$). This result is essentially the best possible (at least using the present methods) when there is a unique prime of $K$ above $p$, or when $K$ is totally real and Leopoldt's conjecture holds (as we show in the next section). However, for other fields $K$ finer statements are possible using the theory of quasi-factorizations developed in \S \ref{sect:quasifact}, as we shall see below.

  \subsection{A converse result for totally real fields}

   Let us now suppose $K$ is totally real. We also suppose that Leopoldt's conjecture holds for $K$, so the image of $\cE$ in $\prod_{\fp \mid p} \cO_\fp^\times$ has rank $n-1$. We shall take $\fF = p^\infty \fg$ for some modulus $\fg$ coprime to $p$, so $\Sigma = \Sigma_p$ is the set of all primes dividing $p$. Leopoldt's conjecture implies that $G_{p^\infty \fg}$ is a $p$-adic analytic group of dimension 1; thus, in particular, the notions of finite-order functions and distributions on $G_{p^\infty \fg}$ are just the standard ones.

   We shall prove the following theorem:

   \begin{theorem}
    \label{thm:converse}
    Let $\underline\alpha = (\alpha_{\fp})_{\fp \in \Sigma_p}$ be elements of $E$, and let $h \in \RR_{\ge 0}$. Then the implication
    \begin{quotation}
     ``Every ray class distribution modulo $p^\infty \fg$ of growth parameter $\underline\alpha$ is a locally constant distribution on $G_{p^\infty \fg}$ with growth bounded by $h$''
    \end{quotation}
    is true if and only if the inequality
    \[ \sum_{\fp \in \Sigma_p} e_{\fp} v_p(\alpha_{\fp}) \le h\]
    holds.
   \end{theorem}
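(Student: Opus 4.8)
The ``if'' direction is immediate from Theorem~\ref{thm:rayclassdist1param}: if $r := \sum_{\fp\mid p} e_\fp v_p(\alpha_\fp) \le h$ then any ray class distribution of growth parameter $\underline\alpha$ has growth bounded by $r$, hence a fortiori by $h$. All the content is in the converse. So assume $r > h$; I must produce \emph{one} ray class distribution of growth parameter $\underline\alpha$ on $G := G_{p^\infty\fg}$ which is not growth bounded by $h$. The idea is to push forward a one‑variable distribution of ``exact order $r$''.

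Set up as in the proof of Theorem~\ref{thm:rayclassdist1param}: choose $c\ge 1$ divisible by every $e_\fp$ and large enough that $\log_p$ identifies $1+\fp^c\cO_{K,\fp}$ with $\fp^c\cO_{K,\fp}$ for all $\fp\mid p$ and that $W := H_{\ff_0}$, the image in $G$ of $U_{\ff_0} = \prod_\fp(1+\fp^c\cO_{K,\fp})$ with $\ff_0 = \fg\prod_{\fp\mid p}\fp^c$, is torsion‑free. Then $\log_p$ gives $W \cong A_0/(A_0\cap L)$, where $A_0 := \bigoplus_\fp \fp^c\cO_{K,\fp}$ inside $A := \cO_K\otimes\ZZ_p$ and $L := \QQ_p\cdot\log_p(\overline{\cE}_\fg\cap U_{\ff_0}) \subseteq A\otimes\QQ_p$. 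Here is the one place the hypotheses on $K$ are used: Leopoldt's conjecture gives $\dim_{\QQ_p}L = n-1$, while $\sum_\fp \Tr_{K_\fp/\QQ_p}(\log_p\epsilon^{(\fp)}) = \log_p N_{K/\QQ}(\epsilon) = 0$ for every unit $\epsilon$, so $L$ lies in the $(n-1)$‑dimensional kernel of the functional $\phi := \sum_\fp\Tr_{K_\fp/\QQ_p}$ on $A\otimes\QQ_p$, whence $L = \ker\phi$. In particular $W\cong\ZZ_p$, it is open in $G$, and $W^{p^m} = H_{\ff_m}$ for $\ff_m := \fg\prod_\fp\fp^{c+e_\fp m}$; so $(W^{p^m})_m$ is cofinal among neighbourhoods of $1$ and may be used to test growth.

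Now set $\alpha := \prod_\fp\alpha_\fp^{e_\fp} \in \cO_E$, so $v_p(\alpha) = r > 0$ and hence $\alpha - 1 \in \cO_E^\times$. Define a locally constant distribution $\nu$ on $W\cong\ZZ_p$ by $\nu(a + p^dW) = \alpha^{-d}f_d(a)$, where $f_0\equiv 1$ and $f_{d+1}\colon W/p^{d+1}W \to \cO_E$ is built recursively so that $\sum_{b\mapsto a}f_{d+1}(b) = \alpha f_d(a)$ for every $a$ and so that $f_{d+1}$ takes some unit value: over the fibre above a point with $f_d(a) = u \in \cO_E^\times$ assign the values $u$, $(\alpha-1)u$ and then $0$'s; over any other fibre distribute $\alpha f_d(a)$ however one pleases. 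One checks $\nu$ is a well‑defined distribution with $v_p\,\nu(a+p^dW)\ge -rd$, and for each $d$ some coset attains $v_p = -rd$ exactly. Let $\Theta = (\Theta_\ff)$ be the locally constant distribution on $G$ obtained by pushing $\nu$ forward along the open embedding $W\hookrightarrow G$ (extending by $0$); so $\Theta_\ff = \sum_{\bar w\in W/H_\ff}\nu(\text{coset }\bar w)\,[\bar w] \in E[G_\ff]$ whenever $H_\ff\subseteq W$.

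It remains to verify that $\Theta$ does the job. It is not growth bounded by $h$: along the cofinal tower $\Theta_{\ff_m}$ has a coefficient of valuation exactly $-rm$, so $\inf_a v_p\,\Theta(1_{a+W^{p^m}}) \le -rm$, which cannot satisfy a bound $C - hm$ once $h < r$. Condition~\eqref{cond1} is automatic for a pushforward. For condition~\eqref{cond2} it suffices, by~\eqref{cond1}, to bound the coefficients of $\Theta_\ff$ for moduli $\ff = \fg\prod_\fp\fp^{n_\fp}$ with all $n_\fp \ge c$ (a general $\ff$ being a norm of such a one, at the cost of a bounded constant). For such $\ff$ one has $H_\ff\subseteq W$ with index $p^{d(\ff)}$, and by the identification above, together with $\Tr_{K_\fp/\QQ_p}(\fp^k\cO_{K,\fp}) = p^{\,k/e_\fp + O(1)}\ZZ_p$ (a statement about the different of $K_\fp/\QQ_p$),
\[ d(\ff) = v_p\Big(\sum_\fp \Tr_{K_\fp/\QQ_p}(\fp^{n_\fp}\cO_{K,\fp})\Big) + O(1) = \min_\fp\frac{n_\fp}{e_\fp} + O(1), \]
with error bounded uniformly in $\ff$. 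Each coefficient of $\Theta_\ff$ is either $0$ or $\nu$ of a coset of an index‑$p^{d(\ff)}$ subgroup of $W$, hence has valuation $\ge -r\,d(\ff) + O(1)$; since $\min_\fp (n_\fp/e_\fp) \le (\sum_\fp n_\fp v_p(\alpha_\fp))/(\sum_\fp e_\fp v_p(\alpha_\fp)) = \tfrac1r\sum_\fp n_\fp v_p(\alpha_\fp)$, this gives $v_p \ge -\sum_\fp n_\fp v_p(\alpha_\fp) + O(1)$, which is exactly~\eqref{cond2}. The only genuinely arithmetic point, and the part I expect to require the most care, is the identification $L = \ker\phi$ (via Leopoldt and the norm relation $\sum_\fp\Tr\circ\log_p = 0$ on units) and the ensuing formula for $d(\ff)$; everything else is bookkeeping.
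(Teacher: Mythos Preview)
Your argument is correct and rests on the same arithmetic core as the paper's: the identification of the closure of the global units inside $\prod_{\fp} \cO_{K,\fp}^\times$ (after taking logarithms) with the kernel of the summed trace functional $\phi = \sum_\fp \Tr_{K_\fp/\Qp}$, which uses Leopoldt together with $\log_p N_{K/\QQ}(\varepsilon)=0$. From this both proofs deduce that the image of a product subgroup $\prod_i U_{\fp_i,r_i}$ in $G$ depends only on $\min_i (r_i-c_i)/e_i$, i.e.\ your formula $d(\ff)=\min_\fp n_\fp/e_\fp + O(1)$.

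The difference is mainly one of presentation. The paper chooses explicit $\Zp$-bases $b_{ij}$ of each $\fp_i^{c_i}$ with $\Tr(b_{ij})=p^w$, so that $\overline{\cE\cap U}$ becomes literally the hyperplane $\sum_j x_j=0$ in $\Zp^n$; it then proves the structural statement that $\prod_i U_{\fp_i,r_i}$ has image contained in $H_m$ only if $r_i\ge c_i+e_i m$ for every $i$, and declares that this ``clearly implies'' the theorem. You instead keep the different/trace bookkeeping as $O(1)$ constants and make explicit the step the paper suppresses: you actually build a locally constant distribution $\nu$ on $W\cong\Zp$ of \emph{exact} order $r$, push it forward to $G$, and verify condition~\eqref{cond2} via the weighted-average inequality $\min_\fp(n_\fp/e_\fp)\le r^{-1}\sum_\fp n_\fp v_p(\alpha_\fp)$. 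Both routes are fine; the paper's basis trick makes the index computations exact rather than up to bounded error, while your version spells out the counterexample that the paper leaves to the reader.
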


   We retain the notation of the proof of Theorem \ref{thm:rayclassdist1param}, so $\fp_1, \dots, \fp_g$ are the primes above $p$, and for each $i$, $c_i$ is a constant such that for all $k \ge c_i$ the logarithm map identifies $U_{\fp_i, k}$ with the additive group $(\fp_i)^m$, and the image of $U = U_{\fp_1, c_1} \times \dots U_{\fp_g, c_g}$ in $G$ is torsion-free and hence isomorphic to $\Zp$.

   Enlarging the $c_i$ further if necessary, we may assume that there is an integer $w$ independent of $i$ such that for each $i$ the image of $\fp_i^{c_i}$ under the trace map $Tr_{K_{\fp_i} / \Qp}$ is $p^w \Zp$. Let $d_i = [K_{\fp_i} : \Qp]$.

   \begin{proposition}
    For each $i$ there exists a basis $b_{i1}, \dots, b_{i d_i}$ of $\fp_i^{c_i}$ as a $\Zp$-module such that
    \[ \Tr_{K_{\fp_i} / \Qp}(b_{ij}) = p^w\]
    for each $j$.
   \end{proposition}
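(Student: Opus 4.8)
The plan is to forget the arithmetic and prove the following purely module-theoretic statement: if $M$ is a free $\Zp$-module of rank $d$ and $T \colon M \to \Zp$ is a $\Zp$-linear map whose image is exactly $p^w \Zp$, then $M$ admits a $\Zp$-basis $b_1, \dots, b_d$ with $T(b_j) = p^w$ for every $j$. Applying this with $M = \fp_i^{c_i}$ (a full $\Zp$-lattice in $K_{\fp_i}$, hence free of rank $d_i$) and $T = \Tr_{K_{\fp_i}/\Qp}|_{M}$, whose image is $p^w\Zp$ by the hypothesis just imposed on the $c_i$, gives the proposition.

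To prove the module statement, first choose any $b_1 \in M$ with $T(b_1) = p^w$; such a $b_1$ exists because $T$ surjects onto $p^w\Zp$. I claim $b_1$ is a primitive vector, i.e.\ $b_1 \notin pM$: indeed $T(pM) = pT(M) = p^{w+1}\Zp$, so $b_1 \in pM$ would force $v_p(T(b_1)) \ge w+1$, contradicting $T(b_1) = p^w$. Hence $b_1$ can be completed to a $\Zp$-basis $b_1, e_2, \dots, e_d$ of $M$.

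Next I would correct the vectors $e_2, \dots, e_d$ by subtracting off a suitable multiple of $b_1$. For $j \ge 2$ note that $T(e_j) \in T(M) = p^w\Zp$, so $\lambda_j := 1 - p^{-w}T(e_j) \in \Zp$, and set $b_j := e_j + \lambda_j b_1$. The transition from $(b_1, e_2, \dots, e_d)$ to $(b_1, b_2, \dots, b_d)$ is given by a unipotent matrix over $\Zp$ (upper triangular with $1$'s on the diagonal), so the latter is again a $\Zp$-basis of $M$; and by construction $T(b_j) = T(e_j) + \lambda_j p^w = p^w$ for $j \ge 2$, while $T(b_1) = p^w$ by the choice of $b_1$. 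This is the required basis.

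There is no real obstacle here; the one point worth noting is that we must arrange the trace of \emph{each} basis vector to equal $p^w$ on the nose, not merely to lie in $p^w\Zp$, and this is precisely what the unipotent correction in the last step achieves. (Equivalently, one could first use the splitting of the surjection $p^{-w}T \colon M \to \Zp$ to pick a basis $e_1, \dots, e_d$ of $M$ with $T(e_1) = p^w$ and $T(e_j) = 0$ for $j \ge 2$, and then take $b_1 = e_1$ and $b_j = e_1 + e_j$ for $j \ge 2$; this is the same computation organised slightly differently.)
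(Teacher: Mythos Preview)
Your proof is correct and is precisely the ``elementary linear algebra'' the paper invokes without details; the paper's own proof consists solely of the words ``Elementary linear algebra.'' Your reduction to the module-theoretic statement about a surjection $T\colon M \twoheadrightarrow p^w\Zp$ and the unipotent correction step are exactly the standard argument one would supply here.
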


   \begin{proof}
    Elementary linear algebra.
   \end{proof}

   \begin{proposition}
    The isomorphism
    \[ U \cong \Zp^{n}\]
    (where $n = [K : \QQ] = \sum_{i=1}^g d_i$) given by the bases $b_{i1}, \dots, b_{i d_i}$ for $1 \le i \le g$ identifies the closure of $\cE \cap U$ with the submodule
    \[ \Delta = \{(x_1, \dots, x_d) \in \Zp^d : \sum_{j=1}^d x_j = 0\}.\]
   \end{proposition}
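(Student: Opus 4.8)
The plan is to exploit the trace identity built into the choice of the bases $b_{ij}$, combined with the fact (recalled above, following the proof of Theorem~\ref{thm:rayclassdist1param}) that the image of $U$ in $G_{p^\infty\fg}$ is isomorphic to $\Zp$. The argument splits into the inclusion ``$\subseteq$'', which is the trace computation, and the reverse inclusion, which is a rank-and-saturation argument.

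First I would establish that the image of $\cE\cap U$ lies in $\Delta$. Under the chosen isomorphism $U\cong\Zp^n$, an element $u=(u_i)_{1\le i\le g}$ corresponds to the tuple $(x_{ij})$ determined by $\log u_i=\sum_j x_{ij}b_{ij}$; since $\Tr_{K_{\fp_i}/\Qp}(b_{ij})=p^w$ for all $i$ and $j$, we get
\[ \sum_{i=1}^g \Tr_{K_{\fp_i}/\Qp}(\log u_i)=p^w\sum_{i,j}x_{ij}.\]
Now take $u=\varepsilon\in\cE\cap U$ and regard $\varepsilon$ as an element of $(K\otimes_{\QQ}\Qp)^\times=\prod_i K_{\fp_i}^\times$ via the diagonal embedding. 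The left-hand side is then $\Tr_{(K\otimes\Qp)/\Qp}(\log\varepsilon)$, and the standard identity $\det=\exp\circ\Tr\circ\log$ applied to the regular representation of $K\otimes\Qp$ gives
\[ \Tr_{(K\otimes\Qp)/\Qp}(\log\varepsilon)=\log\Norm_{(K\otimes\Qp)/\Qp}(\varepsilon)=\log\Norm_{K/\QQ}(\varepsilon)=\log(\pm 1)=0,\]
using that $\varepsilon$ is a global unit. Hence $\sum_{i,j}x_{ij}=0$, so the image of $\cE\cap U$ lies in $\Delta$; since $\Delta$ is closed, so does the image of its closure.

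For the reverse inclusion I would argue as follows. The kernel of the continuous map $U\to G_{p^\infty\fg}$ is exactly $\overline{\cE\cap U}$: it contains $\overline{\cE\cap U}$ because $\cE$ maps to $0$, and conversely any point of $\overline{\cE}_\fg\cap U$ is a limit of units which, $U$ being open, eventually lie in $U$. Since $U$ is compact and its image $H$ in $G_{p^\infty\fg}$ is, by the discussion above, a torsion-free $p$-adic analytic group of dimension $1$, i.e.\ $H\cong\Zp$, passing to the quotient gives $U/\overline{\cE\cap U}\cong\Zp$. Thus $\overline{\cE\cap U}$ is a closed submodule of $U\cong\Zp^n$ with free quotient, hence a direct summand isomorphic to $\Zp^{n-1}$. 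As $\overline{\cE\cap U}\subseteq\Delta$ and $\Delta$ also has $\Zp$-rank $n-1$, the quotient $\Delta/\overline{\cE\cap U}$ is a torsion submodule of $\Zp^n/\overline{\cE\cap U}\cong\Zp$, hence zero; therefore $\overline{\cE\cap U}=\Delta$.

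The one genuinely essential input here is Leopoldt's conjecture for $K$, which enters precisely through the assertion that $H=U/\overline{\cE\cap U}$ has dimension $1$ (equivalently, that $\overline{\cE\cap U}$ has full $\Zp$-rank $n-1$); without it one would obtain only an inclusion $\overline{\cE\cap U}\subseteq\Delta$ of a priori positive corank. The remaining ingredients — the trace bookkeeping with the $b_{ij}$ and the identity $\det=\exp\circ\Tr\circ\log$ — are routine.
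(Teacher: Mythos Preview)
Your argument is correct and follows essentially the same route as the paper's: the inclusion $\overline{\cE\cap U}\subseteq\Delta$ via the trace/norm identity built into the $b_{ij}$, then Leopoldt to get rank $n-1$, then torsion-freeness of $U/\overline{\cE\cap U}$ to force $\Delta/\overline{\cE\cap U}=0$. One small imprecision worth noting: strictly speaking it is $\cE_\fg$, not all of $\cE$, that maps to zero in $G_{p^\infty\fg}$; however, since $[\cE:\cE_\fg]<\infty$ and $U/\overline{\cE_\fg\cap U}\cong H\cong\Zp$ is torsion-free, one has $\overline{\cE\cap U}=\overline{\cE_\fg\cap U}$, so your identification of the kernel is valid.
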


   \begin{proof}
    With respect to the basis $b_1, \dots, b_{e_i f_i}$ of $\fp_i^{c_i}$, the trace map is given by $(x_1, \dots, x_{d_i}) \mapsto p^w \sum_j x_j$, so our isomorphism
    \[ U \stackrel{\log}{\longrightarrow} \prod_{i = 1}^g \fp_i^{c_i} \stackrel{\cong}{\longrightarrow} \Zp^n\]
    identifies $\Delta$ with the elements of $U$ whose norm down to $\QQ$ is 1. However, since $U$ is torsion-free, any $u \in U \cap \cE$ satisfies $\Norm_{K / \QQ}(u) = 1$, so $\Delta$ contains the closure of $\cE \cap U$, which we write as $\Delta'$.

    By Leopoldt's conjecture (which we are assuming), $\Delta'$ has $\Zp$-rank $(n-1)$, the same as $\Delta$; so the quotient $\frac{\Delta}{\Delta'}$ is finite. However, we obviously have
    \[ \frac{U}{\Delta'} \cong \Zp \oplus \frac{\Delta}{\Delta'};\]
    and $\frac{U}{\Delta'} = U(p^\infty, \fp_1^{c_1} \dots \fp_g^{c_g})$ is torsion-free, so we must have $\Delta = \Delta'$.
   \end{proof}

   \begin{proposition}
    Let $H$ be the image of $U$ in $G_{p^\infty \fg}$, and let $H_m = H^{p^m}$ as above. Suppose $m \ge 1$. Then if $r_1, \dots, r_g$ are integers such that the image in $G$ of $U_{\fp_1, r_1} \times \dots \times U_{\fp_g, r_g}$ is contained in $H_m$, we must have $r_i \ge c_i + e_i m$ for all $i$.
   \end{proposition}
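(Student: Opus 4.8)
The plan is to take $p$-adic logarithms and reduce the assertion to a containment of $\Zp$-lattices inside $U$. Write $\pi\colon \prod_{\fp}\cO_{\fp}^\times \to G$ for the natural surjection onto $G = G_{p^\infty\fg}$, whose kernel is $\overline{\cE}_\fg$. Keeping the notation above, $\log$ identifies $U = \prod_i U_{\fp_i, c_i}$ with $\bigoplus_i \fp_i^{c_i}\cO_{\fp_i}$, and the trace-adapted bases $b_{ij}$ give an isomorphism $U \cong \Zp^n$ (with $n = [K:\QQ] = \sum_i d_i$), compatible with the decomposition into primes, under which the sum of the coordinates in the $i$-th block of $\log u$ equals $p^{-w}\Tr_{K_{\fp_i}/\Qp}(\log u_i)$. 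By the preceding propositions (whose proofs use Leopoldt's conjecture) the kernel of $\pi|_U$ is $\overline{\cE}_\fg\cap U = \Delta$, the sum-zero sublattice $\{x \in \Zp^n : \sum_{i,j} x_{ij} = 0\}$. Since $U^{p^m} = \prod_i U_{\fp_i, c_i + e_i m}$ has logarithm $p^m\Zp^n$, the $\pi$-preimage of $H_m = \pi(U^{p^m})$ is $U^{p^m}\overline{\cE}_\fg$; intersecting with $U$ and applying $\log$ shows that $\pi^{-1}(H_m)\cap U$ corresponds to the lattice $p^m\Zp^n + \Delta = \{x \in \Zp^n : \sum_{i,j} x_{ij} \equiv 0 \pmod{p^m}\}$.

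Before using this I would dispose of the possibility that $r_i < c_i$ for some $i$. In that case $U_{\fp_i, c_i} \subsetneq U_{\fp_i, r_i}$, and since $\Tr_{K_{\fp_i}/\Qp}(\fp_i^{c_i}\cO_{\fp_i}) = p^w\Zp$ the restriction of $\pi$ to $U_{\fp_i, c_i}$ (embedded in $\prod_{\fp}\cO_{\fp}^\times$ by $1$ in the remaining factors) already surjects onto $H$; hence the image of $U_{\fp_i, r_i}$ contains $H$. As $m\ge 1$ we have $H_m \subsetneq H$, so this image cannot lie in $H_m$, contradicting the hypothesis. Therefore $r_i \ge c_i$ for all $i$, so $\prod_i U_{\fp_i, r_i} \subseteq U$ and its logarithm is $\bigoplus_i \fp_i^{r_i}\cO_{\fp_i}$.

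The hypothesis now reads precisely: $\bigoplus_i \fp_i^{r_i}\cO_{\fp_i} \subseteq \{x\in\Zp^n : \sum_{i,j}x_{ij} \equiv 0 \pmod{p^m}\}$. Restricting to vectors supported in the $i$-th block, this forces $p^{-w}\Tr_{K_{\fp_i}/\Qp}(\xi) \equiv 0 \pmod{p^m}$ for every $\xi \in \fp_i^{r_i}\cO_{\fp_i}$, i.e.\ $\Tr_{K_{\fp_i}/\Qp}(\fp_i^{r_i}\cO_{\fp_i}) \subseteq p^{m+w}\Zp$. Writing the local different as $\fp_i^{\delta_i}$, one has $\Tr_{K_{\fp_i}/\Qp}(\fp_i^{r}\cO_{\fp_i}) = p^{\lfloor (r + \delta_i)/e_i\rfloor}\Zp$ for all $r$, so the containment gives $\lfloor (r_i + \delta_i)/e_i\rfloor \ge m + w$, whence $r_i + \delta_i \ge e_i(m+w)$. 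Pinning down $c_i$ as the \emph{least} integer with $\Tr_{K_{\fp_i}/\Qp}(\fp_i^{c_i}\cO_{\fp_i}) = p^w\Zp$ (which is compatible with the earlier constraints on $c_i$), we have $c_i + \delta_i = e_i w$, and therefore $r_i + \delta_i \ge e_i m + (c_i + \delta_i)$, i.e.\ $r_i \ge c_i + e_i m$, as required.

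The crux is the lattice identification in the first paragraph: matching $\pi^{-1}(H_m)\cap U$ with $\{\sum_{i,j} x_{ij} \equiv 0 \pmod{p^m}\}$ requires combining the logarithm computations, the trace-adapted bases, and the equality $\overline{\cE}_\fg\cap U = \Delta$ (hence Leopoldt's conjecture). Once that is established, the per-block trace/different computation and the remark that choosing $c_i$ at a jump of the trace filtration costs no constant are routine, and the boundary case $r_i < c_i$ --- the only place $m\ge 1$ is needed --- is handled as above.
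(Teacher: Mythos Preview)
Your argument is correct and follows the same route as the paper: first dispose of the case $r_i < c_i$ (the paper does this by replacing $r_i$ with $\sup(r_i,c_i)$, you instead observe directly that the image then contains all of $H$), and then use the identification of $\ker(\pi|_U)$ with the sum-zero lattice $\Delta$ to reduce the containment in $H_m$ to a per-prime trace condition $\Tr_{K_{\fp_i}/\Qp}(\fp_i^{r_i})\subseteq p^{m+w}\Zp$. Your write-up is in fact more explicit than the paper's one-line ``the result is now clear'': in particular, you correctly note that one must choose each $c_i$ at a jump of the trace filtration (so that $c_i+\delta_i=e_iw$), since otherwise the different computation only yields $r_i\ge c_i+e_im-k_i$ for some $0<k_i<e_i$.
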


   \begin{proof}
    If the image of $U_{\fp_1, r_1} \times \dots \times U_{\fp_g, r_g}$ is contained in $H_m$, then the same must also be true with $(r_1, \dots, r_g)$ replaced by $(r_1', \dots, r_g')$ where $r_i' = \sup(r_i, c_i)$. So we may assume without loss of generality that $r_i \ge c_i$ for all $i$. Now the result is clear from the above description of the image of the global units.
   \end{proof}

   This result clearly implies Theorem \ref{thm:converse}.

   \begin{remark}
    Curiously, the results above seem to conflict with some statements asserted without proof in \cite{panchishkin94} (and quoted by some other subsequent works). The group studied in \emph{op.cit.} corresponds in our notation to $G_{\fg p^\infty}$, where $\fg$ is the product of the infinite places, and $K$ is assumed totally real. (Thus $G_{\fg p^\infty}$ corresponds via class field theory to the Galois group of the maximal abelian extension of $K$ unramified outside $p$ and the infinite places; it is denoted by $\operatorname{Gal}_p$ in \emph{op.cit.}.)

    In Definition 4.2 of \emph{op.cit.}, a locally constant distribution $\mu$ on $G_{\fg p^\infty}$ is defined to be \emph{$1$-admissible} if
    \[ \sup_a \left|\mu\left(1_{a + (\fm)}\right)\right| = o\left(|\fm|_p^{-1}\right).\]
    Paragraph 4.3 of \emph{op.cit.} then claims that a 1-admissible measure extends uniquely to an element of $D^{\la}(G_{\fg p^\infty}, E)$ which, regarded as a rigid-analytic function on the space $\mathcal{X}_p$ parametrizing characters of $G_{\fg p^\infty}$, has growth $o(\log(1 + X))$.

    This is consistent with Theorem \ref{thm:rayclassdist1param} and Theorem \ref{thm:converse} above if there is only one prime above $p$. However, if there are multiple primes above $p$, it is not so clear how $|\fm|$ is to be defined for general ideals $\fm \mid p^\infty$ of $K$, and the uniqueness assertion of Conjecture 6.2 of \emph{op.cit.} (which is asserted to be a consequence of the theory of $h$-admissible measures) contradicts Theorem \ref{thm:converse} of the present paper.
   \end{remark}

  \subsection{Imaginary quadratic fields}

   Let $K$ be an imaginary quadratic field. If $p$ is inert or ramified in $K$ then there is only one prime $\fp$ of $K$ above $p$, and hence there is no canonical direct sum decomposition of $\Lie G_{p^\infty} \cong K_{\fp}$. On the other hand, we can find one when $p$ splits:

   \begin{proposition}
    If $K$ is imaginary quadratic and $p = \fp \fpb$ is split, then the images of $\Lie \cO_{K, \fp}^\times$ and $\Lie \cO_{K, \fpb}^\times$ in $\Lie(G_{p^\infty})$ form a quasi-factorization.
   \end{proposition}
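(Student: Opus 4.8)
The plan is to exploit the exceptional smallness of the global unit group in the imaginary quadratic case. Specialising the four-term exact sequence displayed above to $\fF = p^\infty$ forces the prime-to-$p$ part $\fg$ of the modulus to be trivial and $\Sigma = \{\fp, \fpb\}$, so we obtain
\[ 0 \to \overline{\cE} \to \cO_{K,\fp}^\times \times \cO_{K,\fpb}^\times \to G_{p^\infty} \to G_{(1)} \to 0, \]
where $\overline{\cE}$ is the closure of the image of $\cO_K^\times$ and $G_{(1)}$ is the ideal class group of $K$, which is finite. By the Dirichlet unit theorem the unit group of an imaginary quadratic field has rank $0$, i.e.\ consists only of roots of unity; hence $\overline{\cE}$ is finite as well, and the sequence exhibits $\cO_{K,\fp}^\times \times \cO_{K,\fpb}^\times \to G_{p^\infty}$ as a morphism of $p$-adic analytic groups with finite kernel and finite cokernel.

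The next step is to pass to Lie algebras. The functor $\Lie$ annihilates finite groups, is additive on finite products, and sends a morphism with finite kernel and cokernel to an isomorphism. Applying this to the sequence above gives a canonical isomorphism
\[ \Lie\!\left(\cO_{K,\fp}^\times\right) \oplus \Lie\!\left(\cO_{K,\fpb}^\times\right) = \Lie\!\left(\cO_{K,\fp}^\times \times \cO_{K,\fpb}^\times\right) \stackrel{\sim}{\longrightarrow} \Lie\!\left(G_{p^\infty}\right), \]
induced by the two inclusions $\cO_{K,\fp}^\times, \cO_{K,\fpb}^\times \into \cO_{K,\fp}^\times \times \cO_{K,\fpb}^\times$ followed by the projection to $G_{p^\infty}$. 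In particular each of the maps $\Lie \cO_{K,\fp}^\times \to \Lie G_{p^\infty}$ and $\Lie \cO_{K,\fpb}^\times \to \Lie G_{p^\infty}$ is injective, and their images are complementary subspaces whose direct sum is all of $\Lie G_{p^\infty}$. Since $p$ splits we moreover have $K_\fp = K_{\fpb} = \Qp$, so each summand is one-dimensional and the decomposition is a genuine (rather than trivial) direct sum of two lines; this is precisely the datum of a quasi-factorization of $G_{p^\infty}$ in the sense of the definition above.

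The only genuinely global ingredient is the finiteness of $\cO_K^\times$, which is where the hypothesis that $K$ be imaginary quadratic enters; everything else is formal manipulation with $\Lie$. Accordingly I expect the only point needing care to be the functorial claim that $\Lie$ turns the central map of the four-term sequence into an isomorphism — concretely, that passing to the open subgroup $\ker\!\left(G_{p^\infty} \to G_{(1)}\right)$ does not change the Lie algebra (it is open because $G_{(1)}$ is finite) and that quotienting by the finite subgroup $\overline{\cE}$ does not change it either. Both are standard facts about $p$-adic analytic groups, so no serious obstacle is anticipated; the real content of the proposition is the observation that the split hypothesis makes the two local factors literally copies of $\Qp$, so that the resulting quasi-factorization is non-trivial and hence genuinely refines the order-$r$ theory of Theorem~\ref{thm:rayclassdist1param}.
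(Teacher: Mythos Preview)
Your argument is correct, and it is essentially the reasoning the paper has in mind: the paper states this proposition without proof, evidently regarding it as immediate from the four-term exact sequence together with the finiteness of $\cO_K^\times$ for imaginary quadratic $K$. Your write-up simply makes explicit the passage to Lie algebras that the paper leaves to the reader.
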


   \begin{proposition}
    Let $\mu$ be a ray class distribution modulo $p^\infty \fg$ with growth parameter $(\alpha_{\fp}, \alpha_{\fpb})$. Then $\mu$ has growth bounded by $(v_p(\alpha_\fp), v_p(\alpha_{\fpb}))$ in the sense of Definition \ref{def:quasifactgrowth}.
   \end{proposition}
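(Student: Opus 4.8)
The plan is to run a two-variable refinement of the proof of Theorem~\ref{thm:rayclassdist1param}: instead of collapsing the two primes above $p$ into a single exponent, keep them separate, using the quasi-factorization of $\Lie(G_{p^\infty})$ supplied by the preceding proposition to tell us which subgroups to test against. First I would pin down a convenient choice of compatible subgroups. Since $p = \fp\fpb$ splits, $\cO_{K,\fp} \cong \cO_{K,\fpb} \cong \Zp$, so $e_\fp = e_\fpb = 1$; choose $c$ large enough that the $p$-adic logarithm identifies $U_{\fp,k} := (1 + \fp^k\cO_{K,\fp})^\times$ with $\fp^k\cO_{K,\fp}$ for all $k \ge c$, and likewise for $\fpb$, so in particular $U_{\fp,c}^{p^m} = U_{\fp, c+m}$. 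Because $K$ is imaginary quadratic, $\cO_K^\times$ is finite, hence the group $\overline{\cE}_\fg$ in the exact sequence $0 \to \overline{\cE}_\fg \to \cO_{K,\fp}^\times \times \cO_{K,\fpb}^\times \to G_{p^\infty\fg} \to G_\fg \to 0$ is finite. Enlarging $c$ if necessary, the torsion-free group $U_{\fp,c} \times U_{\fpb,c}$ therefore injects into $G_{p^\infty\fg}$ with open image; letting $H_1$ be the image of $U_{\fp,c} \times \{1\}$ and $H_2$ the image of $\{1\} \times U_{\fpb,c}$, these are closed subgroups whose Lie algebras are exactly the two summands of the quasi-factorization, and $H_1 H_2$ is open, so $(H_1, H_2)$ are compatible subgroups. (In contrast to the totally real case treated above, no appeal to Leopoldt's conjecture is needed here, since $\overline{\cE}_\fg$ is automatically finite.)

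Next I would identify the cosets occurring in Definition~\ref{def:quasifactgrowth} with elements of a finite ray class group. For $m_1, m_2 \ge 0$, the subgroup $H_1^{p^{m_1}} H_2^{p^{m_2}}$ is the image of $U_{\fp, c+m_1} \times U_{\fpb, c+m_2}$, which by the description of the subgroups $H_\ff$ recalled above is precisely the kernel of the natural surjection $G_{p^\infty\fg} \to G_{\ff}$ for the modulus $\ff = \ff_{m_1, m_2} := \fg \cdot \fp^{\,c+m_1}\fpb^{\,c+m_2}$. Thus every coset of $H_1^{p^{m_1}} H_2^{p^{m_2}}$ in $G_{p^\infty\fg}$ is the preimage of a single element $g \in G_{\ff_{m_1, m_2}}$, and, by the definition of a ray class distribution, $\mu$ evaluated on the indicator function of that coset equals the $[g]$-coefficient of $\Phi_{\ff_{m_1, m_2}} \in E[G_{\ff_{m_1, m_2}}]$.

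Finally, the growth estimate follows directly. Condition~(2) in the definition of a ray class distribution gives, exactly as used in the proof of Theorem~\ref{thm:rayclassdist1param}, a constant $C_0$ (depending only on the lattice $\Lambda$) such that every coefficient of $\Phi_\ff$ has valuation at least $C_0 - v_p(\alpha_\fp)\ord_\fp(\ff) - v_p(\alpha_\fpb)\ord_\fpb(\ff)$. Applying this with $\ff = \ff_{m_1, m_2}$, for which $\ord_\fp(\ff) = c + m_1$ and $\ord_\fpb(\ff) = c + m_2$, and taking the infimum over the coset representatives, we obtain
\[ \inf_{a \in G_{p^\infty\fg}} v_p\,\mu\bigl(1_{a \cdot H_1^{p^{m_1}} H_2^{p^{m_2}}}\bigr) \ \ge\ C - \bigl(v_p(\alpha_\fp)\,m_1 + v_p(\alpha_\fpb)\,m_2\bigr), \]
where $C = C_0 - c\,v_p(\alpha_\fp) - c\,v_p(\alpha_\fpb)$ is independent of $m_1, m_2$. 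This is exactly the inequality of Definition~\ref{def:quasifactgrowth} with $(r_1, r_2) = (v_p(\alpha_\fp), v_p(\alpha_\fpb))$, and since that notion is independent of the choice of compatible subgroups, the proposition follows. The only mildly delicate point is the bookkeeping in the second step — verifying that $H_1^{p^{m_1}} H_2^{p^{m_2}}$ is precisely the kernel of $G_{p^\infty\fg} \to G_{\ff_{m_1, m_2}}$ — but this is immediate from the identifications already set up earlier in this section, so there is no genuine obstacle.
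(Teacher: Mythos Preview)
Your argument is correct and is exactly the unpacking of what the paper means by its one-line proof ``Clear by construction'': you keep the two primes separate in the argument of Theorem~\ref{thm:rayclassdist1param}, exploit the finiteness of $\cO_K^\times$ to get injectivity of $U_{\fp,c}\times U_{\fpb,c}$ into $G_{p^\infty\fg}$, and then read off the required inequality from condition~(2). There is no difference in approach.
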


   \begin{proof}
    Clear by construction.
   \end{proof}

   Combining the above with Theorem \ref{thm:quasifactthm} we have the following:

   \begin{theorem}
    Let $K$ be an imaginary quadratic field in which $p = \fp \fpb$ is split, and let $\Theta$ be a ray class distribution modulo $p^\infty \fg$ with values in $V$ and growth parameter $(\alpha_{\fp}, \alpha_{\fpb})$. Let $r = v_p(\alpha_\fp)$ and $s = v_p(\alpha_{\fpb})$. If $r, s < 1$, then there is a unique distribution
    \[ \widetilde\Theta \in D^{(r, s)}(G_{p^\infty}, E) \otimes_E V \]
    such that the restriction of $\widetilde\Theta$ to $LC(G_{p^\infty}, E) \otimes_E V$ is $\Theta$. If we have $r + s < 1$, then $\widetilde\Theta$ lies in $D^{r + s}(G_{p^\infty}, E) \otimes_{E} V$ and agrees with the element constructed in Theorem \ref{thm:rayclassdist1param}.
   \end{theorem}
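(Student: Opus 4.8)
The plan is to bootstrap from the two Propositions just proved together with the abstract interpolation theorems of Section~\ref{sect:padicanalysis}. As in the proof of Theorem~\ref{thm:rayclassdist1param}, one reduces immediately to the case $V = E$: choosing an $\cO_E$-basis of the lattice $\Lambda$ occurring in the definition of a ray class distribution exhibits a $V$-valued $\Theta$ with growth parameter $(\alpha_\fp, \alpha_\fpb)$ as a finite tuple of $E$-valued ones with the same parameter, and all the constructions below are linear in this data. Since $p$ splits in $K$ we have $K_\fp = K_\fpb = \Qp$, so $e_\fp = e_\fpb = 1$; hence the quantity called ``$r$'' in Theorem~\ref{thm:rayclassdist1param} for this $\Theta$ is precisely $v_p(\alpha_\fp) + v_p(\alpha_\fpb) = r + s$. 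Granting this, the first assertion requires no further work: the two Propositions above say that $\Theta$, regarded as a locally constant distribution on $G_{p^\infty}$, has growth bounded by $(r,s)$ in the sense of Definition~\ref{def:quasifactgrowth} relative to the quasi-factorization $(\Lie\cO_{K,\fp}^\times,\, \Lie\cO_{K,\fpb}^\times)$, and since $r, s \in [0,1)$ Theorem~\ref{thm:quasifactthm} yields the unique extension $\widetilde\Theta \in D^{(r,s)}(G_{p^\infty}, E)$.

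For the second assertion the plan is to produce, when $r+s < 1$ (which forces $r,s < 1$ as $r,s \ge 0$, so that $\widetilde\Theta$ is already available), a restriction-compatible injection $D^{(r,s)}(G_{p^\infty}, E) \into D^{r+s}(G_{p^\infty}, E)$, and then to invoke the uniqueness clause of Theorem~\ref{thm:rayclassdist1param}. First I would fix subgroups $H_1, H_2$ compatible with the quasi-factorization, each isomorphic to $\Zp$, chosen small enough (exactly as in the proof of Theorem~\ref{thm:rayclassdist1param}, which is unproblematic since $\overline{\cE}_\fg$ is finite) that $H := H_1 \times H_2 \cong \Zp^2$ is a torsion-free open subgroup of $G_{p^\infty}$. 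Both $C^{r+s}(G_{p^\infty}, E)$ and $C^{(r,s)}(G_{p^\infty}, E)$ are defined by pullback along the finitely many translates of $H$; taking the valuations on both relative to this $H$ one has $p^m H = p^m H_1 \times p^m H_2$, so the ``isotropic'' quantity $\varepsilon_m$ coincides with the ``quasi-factorization'' quantity $\varepsilon_{m,m}$. The crux is then the inclusion $C^{r+s} \subseteq C^{(r,s)}$ with the bound $v_{(r,s)}(f) \ge v_{r+s}(f)$ for all $f$: given $f \in C^{r+s}$ and $(m_1, m_2) \in \mathbf{N}^2$, set $m = \min(m_1, m_2)$; then $p^{m_1}H_1 \times p^{m_2}H_2 \subseteq p^m H$ gives $\varepsilon_{m_1,m_2}(f) \ge \varepsilon_{m,m}(f)$, while $r m_1 + s m_2 \ge (r+s)m$ since $r, s \ge 0$, so $\varepsilon_{m_1,m_2}(f) - (r m_1 + s m_2) \ge \varepsilon_{m,m}(f) - (r+s)m \ge v_{r+s}(f)$. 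As $(m_1, m_2) \to \infty$ along the cofinite filter one has $m \to \infty$, and $f \in C^{r+s}$ makes the middle expression tend to $+\infty$; this proves $f \in C^{(r,s)}$ and, combined with $\inf_{x} v_p(f(x)) \ge v_{r+s}(f)$, the norm bound. Consequently $C^{r+s} \into C^{(r,s)}$ is continuous of operator norm $\le 1$; its image contains $LC(G_{p^\infty}, E)$, which is dense in $C^{(r,s)}$ (the explicit Banach basis furnished by Theorem~\ref{thm:quasifactthm} consisting of locally constant functions), so the image is dense and dualizing gives the injection $D^{(r,s)} \into D^{r+s}$, visibly compatible with restriction to $LC$.

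Finally, the image of $\widetilde\Theta$ under $D^{(r,s)} \into D^{r+s}$ is an element of $D^{r+s}(G_{p^\infty}, E)$ whose restriction to $LC$ is $\Theta$; since $r+s = e_\fp v_p(\alpha_\fp) + e_\fpb v_p(\alpha_\fpb) < 1$, the uniqueness in Theorem~\ref{thm:rayclassdist1param} forces it to coincide with the distribution $\tilde\Theta$ constructed there, which is the claimed agreement (and tensoring back up with $V$ recovers the general case). I expect the only step with genuine content to be the function-space comparison in the middle paragraph — specifically, keeping straight the two meanings of ``$\varepsilon_m$'' and passing from the cofinite-filter convergence that defines $C^{(r,s)}$ to the ordinary $m \to \infty$ convergence that defines $C^{r+s}$, which is exactly what the reduction to $m = \min(m_1, m_2)$ accomplishes; everything else is a direct application of Theorems~\ref{thm:quasifactthm} and~\ref{thm:rayclassdist1param} and of the two Propositions above.
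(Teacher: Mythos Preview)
Your treatment of the first assertion is correct and matches the paper's (one-line) argument: the two Propositions feed directly into Theorem~\ref{thm:quasifactthm}.

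The second assertion, however, contains a genuine error: the inclusion $C^{r+s} \subseteq C^{(r,s)}$ that you claim is false, and your derivation of it breaks in two places. First, from $\varepsilon_{m_1,m_2}(f) \ge \varepsilon_{m,m}(f)$ and $r m_1 + s m_2 \ge (r+s)m$ you cannot deduce $\varepsilon_{m_1,m_2}(f) - (r m_1 + s m_2) \ge \varepsilon_{m,m}(f) - (r+s)m$; the two inequalities point in opposite directions after subtraction. Second, ``$(m_1,m_2)\to\infty$ in the cofinite filter'' does \emph{not} force $m=\min(m_1,m_2)\to\infty$: the sequence $(n,0)_{n\ge 0}$ escapes every finite subset of $\mathbf{N}^2$ while $m\equiv 0$. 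Concretely, if $r>0$ and $g\in C^{r+s}(\Zp,E)$ is not locally constant, then $f(x_1,x_2)=g(x_2)$ lies in $C^{r+s}(\Zp^2,E)$ but not in $C^{(r,s)}(\Zp^2,E)$, since $\varepsilon_{m,0}(f)$ is a constant independent of $m$ and hence $\varepsilon_{m,0}(f)-rm\to -\infty$.

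The correct comparison is the reverse one: restricting to the diagonal $(m,m)$ shows $C^{(r,s)}\subseteq C^{r+s}$ continuously, and since $LC$ is dense in $C^{r+s}$ this gives an injection $D^{r+s}\hookrightarrow D^{(r,s)}$ compatible with restriction to $LC$. Now Theorem~\ref{thm:rayclassdist1param} (applicable because $e_\fp v_p(\alpha_\fp)+e_\fpb v_p(\alpha_\fpb)=r+s<1$) directly produces an element $\tilde\Theta'\in D^{r+s}(G_{p^\infty},E)$ extending $\Theta$; its image in $D^{(r,s)}$ also extends $\Theta$, so by the uniqueness clause of the \emph{first} part (i.e.\ uniqueness in $D^{(r,s)}$, from Theorem~\ref{thm:quasifactthm}) it must equal $\widetilde\Theta$. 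This is what shows $\widetilde\Theta\in D^{r+s}$ and that it agrees with the element of Theorem~\ref{thm:rayclassdist1param}; your attempted injection $D^{(r,s)}\hookrightarrow D^{r+s}$ would, if it existed, make the conclusion automatic but is simply not available.
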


  \subsection{Fields of higher degree}
   \label{sect:higherdegree}

   When $K$ is a non-totally-real extension of degree $>2$, one can sometimes find interesting quasi-factorizations of $G_{p^\infty}$ by considering subsets of the primes above $p$. If we identify $G_{p^\infty}$ with the Galois group of the maximal abelian extension $K(p^\infty) / K$ unramified outside $p$, we seek to write $K(p^\infty)$ as a compositum of smaller extensions, each ramified at as few primes as possible. We give a few examples of the behaviour that occurs when $p$ is totally split.

   \begin{example}[Mixed signature cubic fields]
    Suppose $K$ is a non-totally-real cubic field, and $p$ is totally split in $K$, say $p = \fp_1 \fp_2 \fp_3$. Then $G_{p^\infty}$ has dimension 2 (since Leopoldt's conjecture is trivially true in this case), and the images of the groups $\cO_{K, \fp_i}^\times$ in $G_{p^\infty}$ give three pairwise-disjoint one-dimensional subspaces of $\Lie G_{p^\infty}$, so any two of these form a quasi-factorization. One therefore obtains three quasi-factorizations of $G_{p^\infty}$.

    One checks that with respect to the quasi-factorization given by $\fp_1$ and $\fp_2$, a ray class distribution of parameter $(\alpha_{\fp_1}, \alpha_{\fp_2}, \alpha_{\fp_3})$ has growth bounded by $(r,s)$ if and only if $v_p(\alpha_{\fp_1}) + v_p(\alpha_{\fp_3}) \le r$ and $v_p(\alpha_{\fp_2}) + v_p(\alpha_{\fp_3}) \le s$. For instance, if all $\alpha_i$ are equal and their common valuation $h$ is $ < \tfrac12$, then can construct a locally analytic distribution (indeed three of them), while Theorem \ref{thm:rayclassdist1param} would only apply if $h < \tfrac13$. For the cases $\tfrac13 \le h < \tfrac12$, it is not immediately obvious whether the interpolations provided by the three quasi-factorizations are equal or not, but we shall see in the next section that this is indeed the case.
   \end{example}

   \begin{example}[Quartic fields]
    Suppose $K$ is a totally imaginary quartic field, and $p$ is totally split in $K$. As in the previous example, $\cO_K^\times$ has rank 1 and thus Leopoldt's conjecture is automatic, so $G_{p^\infty}$ has rank 3. We can obtain a quasi-factorization of $G_{p^\infty}$ by considering the images of the Lie algebras of $\cO_{K, \fp_i}^\times$ for $1 \le i \le 3$; then (much as in the previous case) we find that the condition for a ray class distribution to have growth bounded by $(r, s, t)$ is that
    \begin{align*}
     v_p(\alpha_1) + v_p(\alpha_4) &\le r,\\
     v_p(\alpha_2) + v_p(\alpha_4) &\le s,\\
     v_p(\alpha_3) + v_p(\alpha_4) &\le t.\\
    \end{align*}
    A slightly more intricate class of quasi-factorizations can be obtained as follows. We can choose a basis for $\Lie \cO_{K, \fp_i}^\times$ for each $i$ such that the Lie algebra of the subgroup $\overline{\cE}_K$ corresponds to the subspace $\Delta$ of $\Qp^4$ spanned by $(1,1,1,1)$. Then choosing a quasi-factorization amounts to choosing a basis of the 3-dimensional space of linear functionals on $\Qp^4$ which vanish on $\Delta$. One such basis is given by the functionals mapping $(x_1, x_2, x_3, x_4)$ to $\{ x_1 - x_4, x_2 - x_4, x_3 - x_4\}$, and this gives the quasi-factorization we have already seen. However, we can also consider the functionals $\{ x_1 - x_2, x_2 - x_3, x_3 - x_4\}$. One checks then that the condition for a ray class distribution to have growth bounded by $(r, s, t)$ is
    \begin{align*}
     v_p(\alpha_1) + v_p(\alpha_2) &\le r,\\
     v_p(\alpha_2) + v_p(\alpha_3) &\le s,\\
     v_p(\alpha_3) + v_p(\alpha_4) &\le t.\\
    \end{align*}
   \end{example}

   One can generalize the last example to a much wider range of number fields under suitable assumptions on the position of $\Lie \overline\cE_K$ inside $\Lie (\Zp \otimes \cO_K)^\times$ (which amounts to assuming a strong form of Leopoldt's conjecture, such as that formulated in \cite{calegarimazur09}).

  \subsection{A representation-theoretic perspective} For simplicity let us suppose that $K$ has class number 1 and $\fg = (1)$, so $G :=G_{p^\infty} = U / \overline{\Delta}$ where $U = \prod_{i = 1}^g \cO_{\fp_i}^\times$ and $\Delta = \cO_K^\times$. Pick real numbers $r_1, \dots, r_g$ with $r_i \in [0, 1) \cap v_p(E^\times)$.

   We consider the inclusion
   \[ LC(U, E)^\Delta \into C^{(r_1, \dots, r_g)}(U, E)^{\Delta},\]
   where we give $U$ the obvious quasi-factorization. We have, of course, $LC(U, E)^\Delta = LC(U / \overline{\Delta}, E) = LC(G, E)$. The following proposition is immediate from the definitions:

   \begin{proposition}
    Let $\mu$ be an element of $D^{(r_1, \dots, r_g)}(U, E)$. Then the restriction of $\mu$ to $LC(U, E)^\Delta$ is a ray class distribution of parameter $(\alpha_1, \dots, \alpha_g)$, where the $\alpha_i$ are any elements with $v_p(\alpha_i) = r_i$.

    Conversely, any ray class distribution of parameter $(\alpha_1, \dots, \alpha_g)$ defines a linear functional on $LC(U, E)^\Delta$ which is continuous with respect to the subspace topology given by the inclusion into $C^{(r_1, \dots, r_g)}(U, E)$.
   \end{proposition}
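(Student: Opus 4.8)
The plan is to reduce each of the two assertions to one and the same inequality, namely a lower bound on $v_p\bigl(\mu(1_X)\bigr)$ as $X$ ranges over the cosets of the congruence subgroups $U_\ff := \prod_{i=1}^g(1+\fp_i^{n_i}\cO_{\fp_i})^\times$ of $U$. First I would record the bookkeeping. Recall from the discussion above that $LC(U,E)^\Delta = LC(U/\overline{\Delta},E) = LC(G,E)$ (a locally constant $\Delta$-invariant function on $U$ is $\overline{\Delta}$-invariant by density of $\Delta$ in $\overline{\Delta}$), and that the exact sequence recalled above, with $\fg=(1)$ so that $G_\fg$ is trivial by the class-number hypothesis, identifies $U_\ff$ with the kernel $H_\ff$ of $G\twoheadrightarrow G_\ff$. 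A linear functional on $LC(G,E)$ is the same data as a compatible system $(\Phi_\ff)_{\ff\mid p^\infty}$ of elements $\Phi_\ff\in E[G_\ff]$ satisfying the norm-compatibility condition in the definition of a ray class distribution; so the content lies in the integrality condition, which (taking $V=E$, $\Lambda=p^C\cO_E$) asserts precisely that there is a single constant $C$ with $v_p\bigl(\Phi_\ff(x)\bigr)\ge C-\sum_i n_i v_p(\alpha_i)$ for all $\ff=\prod_i\fp_i^{n_i}$ and all $x\in G_\ff$. On the other hand, choosing the $c_i$ so that $\log$ identifies $H_i:=(1+\fp_i^{c_i}\cO_{\fp_i})^\times$ with $\fp_i^{c_i}\cO_{\fp_i}$, the subgroup $(p^{m_1}H_1)\times\dots\times(p^{m_g}H_g)$ of $U$ equals $U_\ff$ for $\ff=\prod_i\fp_i^{c_i+e_{\fp_i}m_i}$, so that — after absorbing into $C$ the bounded discrepancy coming from general $n_i$ not of this shape — the displayed bound is exactly Definition~\ref{def:quasifactgrowth} for the locally constant distribution $\mu$ on $U$ restricted to $\Delta$-invariant functions, with respect to the obvious quasi-factorization, with parameter $(e_{\fp_1}v_p(\alpha_1),\dots,e_{\fp_g}v_p(\alpha_g))$; this is $(r_1,\dots,r_g)$ precisely when $p$ is unramified in $K$ (the case relevant to the applications, and I would simply assume it, noting that otherwise one rescales the $r_i$ by the ramification indices as in Theorem~\ref{thm:rayclassdist1param}).

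With this dictionary both directions are then mechanical. For the first assertion, an element $\mu\in D^{(r_1,\dots,r_g)}(U,E)$ restricts to a locally constant distribution on $U$ whose growth is bounded by $(r_1,\dots,r_g)$ — the converse to Theorem~\ref{thm:quasifactthm}, which one reads off from the explicit tensor-product Banach basis of $C^{(r_1,\dots,r_g)}(U,E)=\widehat\bigotimes_i C^{r_i}(\cO_{\fp_i}^\times,E)$ exactly as Theorem~\ref{thm:avv}(i) is deduced from the one-variable wavelet basis — and hence so does its further restriction to $LC(U,E)^\Delta=LC(G,E)$; by the paragraph above this restriction is a ray class distribution of parameter $(\alpha_1,\dots,\alpha_g)$, with $\alpha_i$ any elements of valuation $r_i$. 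For the converse, a ray class distribution $\Theta$ of parameter $(\alpha_1,\dots,\alpha_g)$ is, again by the above, a locally constant distribution on $G$ of growth bounded by $(r_1,\dots,r_g)$; I would then expand an arbitrary $f\in LC(U,E)^\Delta$ with $v_{(r_1,\dots,r_g)}(f)\ge 0$ against (the part adapted to the subspace $LC(U,E)^\Delta$ of) that Banach basis to get a uniform bound $v_p(\Theta(f))\ge C$, which is exactly continuity for the subspace topology. When $\overline{\Delta}$ is finite — in particular for $K$ imaginary quadratic — this last step simplifies: $\Lie G=\Lie U$ inherits the quasi-factorization and $C^{(r_1,\dots,r_g)}(G,E)=C^{(r_1,\dots,r_g)}(U,E)^\Delta$, so Theorem~\ref{thm:quasifactthm} applied directly to $G$ produces $\widetilde\Theta\in D^{(r_1,\dots,r_g)}(G,E)$ restricting to $\Theta$, and this is the desired continuous extension.

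The one genuinely non-formal point — and the reason a crude estimate will not do — is that the Banach basis of the quasi-factorization space $C^{(r_1,\dots,r_g)}(U,E)$ is normalized by $\sum_i\lfloor r_i m_i\rfloor$, a sum of per-prime contributions, whereas viewing $\mu$ merely as an element of $D^{\,r_1+\dots+r_g}(G,E)$ would normalize by the single quantity $\lfloor(r_1+\dots+r_g)m\rfloor$; it is precisely the finer, per-prime normalization that matches the per-prime shape of the growth parameter $(\alpha_i)$, and verifying this matching is the heart of the argument. The remaining work — keeping track of the finitely many $U_\ff$-cosets $\overline{\Delta}$ contributes inside each $H_\ff$-coset, and, for fields with $\overline{\Delta}$ infinite, producing a Banach basis of $C^{(r_1,\dots,r_g)}(U,E)$ compatible with the filtration by level of the subspace $LC(U,E)^\Delta$ — is routine but not entirely vacuous.
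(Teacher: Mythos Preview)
The paper gives no proof beyond the line ``The following proposition is immediate from the definitions.'' Your proposal is exactly an unpacking of that sentence---setting up the identification $LC(U,E)^\Delta = LC(G,E)$, matching the box subgroups $(p^{m_1}H_1)\times\cdots\times(p^{m_g}H_g)$ with the congruence subgroups $U_\ff$, and reading off that the growth bound of Definition~\ref{def:quasifactgrowth} is (up to the harmless constants $c_i$) the integrality clause in the definition of a ray class distribution. So your approach and the paper's coincide; you have simply written out what the paper leaves implicit.

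Two small comments. First, your observation about ramification is correct: the identity $p^{m_i}H_i = (1+\fp_i^{c_i+e_{\fp_i}m_i}\cO_{\fp_i})^\times$ forces the match $r_i = e_{\fp_i}v_p(\alpha_i)$ rather than $v_p(\alpha_i)=r_i$, so the proposition as stated tacitly assumes $e_{\fp_i}=1$. All of the paper's applications are to the totally split case, so nothing downstream is affected, but you are right to flag it. Second, for the converse direction your phrase ``expand against (the part adapted to the subspace $LC(U,E)^\Delta$ of) that Banach basis'' is the only point where one has to do a little honest work, since the box-indicator basis vectors of $C^{(r_1,\dots,r_g)}(U,E)$ are themselves not $\Delta$-invariant and $\Theta$ cannot be evaluated on them individually. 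The clean way to finish is a telescoping argument: write $f$ as $f_0 + \sum_k (f_k - f_{k-1})$ with $f_k$ the level-$k$ truncation, note that the values of $f_k - f_{k-1}$ are controlled by $\varepsilon_{(m_i)}(f)$ while $\Theta$ on level-$k$ indicators is controlled by the ray-class growth bound, and observe that the two normalizations cancel. This is precisely the ``per-prime normalization matching'' you highlight in your last paragraph, so you have identified the right mechanism; it just wants one more line to be airtight.
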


   However, even though $LC(U, E)$ is dense in $C^{(r_1, \dots, r_g)}(U, E)$, it does not necessarily follow that  $LC(U, E)^\Delta$ is dense in $C^{(r_1, \dots, r_g)}(U, E)^{\Delta}$; the completion of the invariants may be smaller than the invariants of the completion.

   \begin{example}
     Consider the case where $U$ is $\Zp^2$ with its natural quasi-factorization, and $\overline{\Delta}$ is the subgroup $\{(x, y) : x + y = 0\}$. Then a continuous function $f: U \to E$ is $\Delta$-invariant if and only if there is a function $h \in C^0(\Zp, E)$ such that $f(x, y) = h(x + y)$. So the coefficients in the Mahler--Amice expansion $f(x, y) = \sum_{m, n} a_{m,n} \binom x m \binom y n$ are given by $a_{m, n} = b_{m + n}$, where $h(x) = \sum_{n \ge 0} b_n \binom x n$. Since the functions \[ p^{\lfloor r_1 \ell(m) + r_2 \ell(n)\rfloor} \binom x m \binom y n\]
     are a Banach basis of $C^{(r_1, r_2)}(U, E)$, we see that $f$ is in this space if and only if
     \[ \lim_{m, n \to \infty} v_p(b_{m + n}) - r_1 \ell(m) - r_2 \ell(n) = \infty.\]

     The supremum of $r_1 \ell(m) + r_2 \ell(n)$ over pairs $(m, n)$ with $m + n = k$ is $(r_1 + r_2)\ell(k) + O(1)$ as $k \to \infty$, so this is equivalent to
     \[ \lim_{k \to \infty} v_p(b_k) - (r_1 + r_2) \ell(k) = \infty,\]
     which is precisely the condition that $h$ is $C^{r}$ where $r = r_1 + r_2$. Moreover, this gives a Banach space isomorphism between $C^r(\Zp, E)$ and $C^{(r_1, r_2)}(U, E)^\Delta$ preserving the subspaces of locally constant functions, so if $r_1 + r_2 \ge 1$ we see that $LC(U, E)^\Delta$ is not dense in $C^{(r_1, r_2)}(U, E)^\Delta$.
    \end{example}

    \begin{remark}
     In \S \ref{sect:higherdegree}, we gave criteria for locally constant functions on $U / \overline\Delta$ to be dense in a Banach space whose topology is coarser than that of $C^{(r_1, \dots, r_g)}(U, E)^\Delta$. This in particular implies that such functions are dense in $C^{(r_1, \dots, r_g)}(U, E)^\Delta$ in these cases, and hence that the multiple possible choices of auxilliary quasi-factorizations considered in the examples above do all give the same distribution on $G$ when they apply.
    \end{remark}

 \section{Construction of $p$-adic $L$-functions}

  We now give the motivating example of a ray class distribution: the distribution constructed from the Mazur--Tate elements of an automorphic representation of $\GL_2 / K$.

  \subsection{Mazur--Tate elements}
   \label{sect:mtdefs}

   Let $K$ be a number field, and $\Pi$ an automorphic representation of $\GL_2 / K$. We make the following simplifying assumptions:

   \begin{enumerate}
    \item $\Pi$ is cohomological in trivial weight, i.e.~the $(\fg, K_\infty)$-cohomology of $\Pi$ does not vanish, where $K_\infty$ is a maximal compact connected subgroup of $\GL_2(K \otimes \RR)$;
    \item the central character of $\Pi$ is trivial.
   \end{enumerate}

   Conditions (1) and (2) are satisfied, for instance, if $\Pi$ is the base-change to $K$ of the automorphic representation of $\GL_2 / \QQ$ attached to a modular form of weight 2 and trivial nebentypus. It follows from (1) and (2) that there exists a finite extension $F / \QQ$ inside $\CC$ such that the $\GL_2(\AA_K^\infty)$-representation $\Pi^{\infty}$ can be defined over $F$, and in particular the Hecke eigenvalues $a_\fl(\Pi)$ for each prime $\fl$ of $K$ are in $\cO_F$.

   \begin{theorem}[Haran]
    \label{thm:haran}
    Under the above assumptions, there exists a finitely-generated $\cO_F$-sub\-module $\Lambda_\Pi$ of $\CC$, and for each ideal $\ff$ of $K$ coprime to the conductor of $\Pi$ an element
    \[ \Theta_\ff(\Pi) \in \ZZ[G_\ff] \otimes_{\ZZ} \Lambda_\Pi,\]
    where $G_{\ff}$ is the ray class group modulo $\ff$, such that the following relations hold:
    \begin{enumerate}[(i)]
     \item (Special values) If $\omega$ is a primitive ray class character of conductor $\ff$, then
     \[ \omega\left(\Theta_\ff(\Pi)\right) = \frac{L_{\ff}(\Pi, \omega, 1)}{\tau(\omega) \cdot |\ff|^{1/2} \cdot (4\pi)^{[K : \QQ]}}\]
     where $L_\ff(\Pi, \omega, s)$ denotes the $L$-function of $\Pi$ twisted by $\omega$, without the Euler factors at $\infty$ or at primes dividing $\ff$, and $\tau(\omega)$ is the Gauss sum (normalized so that $|\tau(\omega)| = 1$);
     \item (Norm relation) For $\fl$ a prime not dividing the conductor of $\Pi$, we have
     \[ \Norm_{\ff}^{\ff \fl} \Theta_{\ff \fl}(\Pi) =
      \begin{cases}
       \left(a_\fl(\Pi) - \sigma_{\fl} - \sigma_{\fl}^{-1}\right) \Theta_{\ff}(\Pi) & \text{if $\fl \nmid \ff$,}\\
       a_\fl(\Pi) \Theta_{\ff}(\Pi) - \Theta_{\ff / \fl}(\Pi) & \text{if $\fl \mid \ff$.}
      \end{cases}
     \]
     where $\sigma_{\fl}$ denotes the class of $\fl$ in $G_{\ff}$.
    \end{enumerate}
   \end{theorem}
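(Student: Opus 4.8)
The plan is to realise $\Theta_\ff(\Pi)$ as a cohomological modular symbol attached to $\Pi$, following Haran \cite{haran87}. By assumptions (1) and (2), $\Pi$ contributes to the Betti cohomology of the adelic locally symmetric space
\[ S_\ff = \GL_2(K) \backslash \left( \GL_2(\AA_K^\infty)/U_1(\ff) \times X_\infty \right), \]
where $X_\infty$ is the symmetric space of $\GL_2(K \otimes \RR)$ and $U_1(\ff)$ is the usual level subgroup; writing $q_0 = r_1 + r_2$, with $r_1$ (resp.\ $r_2$) the number of real (resp.\ complex) places of $K$, a choice of vector $\varphi \in \Pi_f^{U_1(\ff)}$ together with a generator of a chosen eigenline for the sign involutions at the real places on the degree-$q_0$ part of the $(\fg, K_\infty)$-cohomology of $\Pi_\infty$ (which is non-zero precisely because $\Pi$ has lowest cohomological infinity-type) determines a class $\phi_\Pi \in H^{q_0}(S_\ff, \CC)$. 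First I would fix such a $\phi_\Pi$ inside the $\Pi$-isotypic part of \emph{integral} cohomology; this is well-defined up to a non-zero complex scalar, and one takes $\Lambda_\Pi \subseteq \CC$ to be the finitely generated $\cO_F$-module generated by the periods produced below, so that the scalar ambiguity is absorbed into $\Lambda_\Pi$.

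Next I would introduce the torus cycle. The embedding $\GL_1 \into \GL_2$, $t \mapsto \operatorname{diag}(t,1)$, induces a map into $S_\ff$ from the (locally symmetric) space of $\GL_1/K$ of level $\ff$, which is a disjoint union of relative cycles of dimension $r_1 + r_2 = q_0$ whose connected components are indexed by the ray class group $G_\ff$; since $\phi_\Pi$ is cuspidal, capping it against the fundamental class of each component is well-defined, and recording the resulting periods as the coefficients of an element of $\CC[G_\ff]$ gives the candidate $\Theta_\ff(\Pi)$. Integrality of the cap product against integral cycles, together with the integrality of $\phi_\Pi$, then shows that the coefficients lie in $\ZZ \cdot \Lambda_\Pi$, i.e.\ $\Theta_\ff(\Pi) \in \ZZ[G_\ff] \otimes_\ZZ \Lambda_\Pi$. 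The special value formula~(i) is the cohomological incarnation of Hecke's integral representation: pairing $\phi_\Pi$ against the $\omega$-twisted cycle unfolds to $\int_{\AA_K^\times/K^\times} \varphi(\operatorname{diag}(t,1))\, \omega(t)\, d^\times t$, which factors as an Euler product the bulk of which is $L_\ff(\Pi, \omega, 1)$, while the remaining local integrals --- at the primes dividing $\ff$ and at the archimedean places --- contribute $\tau(\omega)$, $|\ff|^{1/2}$ and $(4\pi)^{[K:\QQ]} = \prod_{v \mid \infty} (4\pi)^{[K_v : \RR]}$ respectively.

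For the norm relation~(ii) I would compare the level-$\ff$ and level-$\ff\fl$ constructions by means of the two degeneracy maps $S_{\ff\fl} \rightrightarrows S_\ff$: the operator $\Norm_\ff^{\ff\fl}$ on group algebras matches the pushforward of cohomology classes, and the interaction of the torus cycles with the degeneracy maps is governed by the local Hecke algebra at $\fl$, producing the factor $a_\fl(\Pi) - \sigma_\fl - \sigma_\fl^{-1}$ when $\fl \nmid \ff$ (the terms $\sigma_\fl^{\pm 1}$ coming from the translates of the torus cycle by $\operatorname{diag}(\varpi_\fl,1)$ and $\operatorname{diag}(1,\varpi_\fl)$, which land in the corresponding translates of the components) and the simpler $U_\fl$-relation $a_\fl(\Pi)\Theta_\ff - \Theta_{\ff/\fl}$ when $\fl \mid \ff$. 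I expect the main obstacle to be twofold. First, one must produce a \emph{single} finitely generated $\Lambda_\Pi$ and a compatible family of integral classes $\phi_\Pi$ valid for all $\ff$ at once, which requires controlling torsion in $H^{q_0}(S_\ff, \ZZ)$ and the behaviour of integral cohomology under the degeneracy maps. Second, the archimedean computation for mixed signature is delicate: it is here that the lowest-infinity-type hypothesis must be used to make the archimedean pairings explicit, the normalising constant must come out exactly as $(4\pi)^{[K:\QQ]}$, and the choice of real-place signs (and the precise identification of the component set with $G_\ff$, including the convention at real places) has to be pinned down consistently across all $\ff$.
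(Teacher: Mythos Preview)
Your sketch correctly outlines Haran's construction via torus cycles and cohomological modular symbols, which is precisely what the paper invokes: its own proof consists solely of the citation to \cite{haran87} (together with the remark that the totally-imaginary hypothesis imposed in the main text there is lifted in \S 7 of op.\ cit.). The picture you give is further corroborated by the paper's proof of the subsequent Manin--Drinfeld-type proposition, where the $\Theta_\ff(\Pi)$ are described as pairings of a universal Mazur--Tate element in Borel--Moore homology $H_{r_1+r_2}^{\mathrm{BM}}(Y,\ZZ)$ against a class in $H^{r_1+r_2}_{\mathrm{par}}(Y,\CC)$ arising from $\Pi$.
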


   \begin{proof}
    See \cite{haran87}. (Note that in the main text of \emph{op.cit.} it is assumed that $K$ is totally imaginary, but this assumption can easily be removed, as noted in section 7.)
   \end{proof}

   \begin{remark}
    In fact one can obtain essentially the same result under far weaker assumptions: it suffices to suppose that $\Pi$ is cuspidal, cohomological (of any weight), and critical. However, the necessary generalizations of Shai Haran's arguments are time-consuming, so we shall not consider this more general setting here. For the case when $K$ is totally real, but $\Pi$ has arbitrary critical weight and central character, see \cite{dimitrov13}.
   \end{remark}

   We refer to the elements $\Theta_\ff(\Pi) \in \ZZ[G_\ff] \otimes_{\ZZ} \Lambda_\Pi$ as \emph{Mazur--Tate elements}, since they are closely analogous to the group ring elements considered in \cite{mazurtate87}. The coefficient module $\Lambda_{\Pi}$ is in fact rather small, as the following result (an analogue of the Manin--Drinfeld theorem) shows:

   \begin{proposition}
    \label{prop:manindrinfeld}
    The $\cO_F$-submodule $\Lambda_{\Pi} \subseteq \CC$ may be taken to have rank 1.
   \end{proposition}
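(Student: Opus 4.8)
The plan is to unwind Haran's construction (Theorem~\ref{thm:haran}) and exhibit a single complex period $\Omega_\Pi$ generating $\Lambda_\Pi$ up to $F^\times$. The first step is a reduction: let $F' = F(\mu_\infty)$ and write $\Theta_\ff(\Pi) = \sum_{g \in G_\ff} [g] \otimes \mu_\ff(g)$ with $\mu_\ff(g) \in \Lambda_\Pi$. Since the ideals $\ff$ coprime to the conductor of $\Pi$ are cofinal and the norm relations show the $G_\ff$ form an inverse system, the elements $\mu_\ff(g)$ generate $\Lambda_\Pi$. Fourier inversion over the finite abelian group $G_\ff$ expresses each $\mu_\ff(g)$ as an $\cO_{F'}$-combination of the values $\omega(\Theta_\ff(\Pi))$ and conversely, so $\operatorname{rank}_{\cO_F}\Lambda_\Pi = \operatorname{rank}_{\cO_{F'}}(\Lambda_\Pi \otimes_{\cO_F}\cO_{F'})$ equals the dimension of the $F'$-span inside $\CC$ of all the special values $\omega(\Theta_\ff(\Pi))$ appearing in Theorem~\ref{thm:haran}(i). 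The rank is visibly $\ge 1$ (some $\Theta_\ff(\Pi)$ is nonzero as $\Pi \ne 0$), so it remains to prove ``$\le 1$''.

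Next I would recall from \cite{haran87} the cohomological nature of $\Theta_\ff(\Pi)$: it is obtained by pairing a single class $\phi_\Pi$ --- attached to a fixed new vector of $\Pi$, lying in the $\Pi^\infty$-isotypic part of the Betti cohomology $H^q(S_{K_f},\CC)$ of the locally symmetric space of level $K_f$ equal to the conductor of $\Pi$, in an \emph{extreme} degree $q$ of the cuspidal range --- against an explicit system of cycles $Z_\ff$ indexed by $G_\ff$. By the usual computation of $(\fg,K_\infty)$-cohomology for $\GL_2$, the space $H^q(\fg,K_\infty;\Pi_\infty)$ has dimension $2^{r_1}$, where $r_1$ is the number of real places of $K$, the archimedean component group $\pi_0 \cong (\ZZ/2)^{r_1}$ acting through the real factors; because $q$ is extreme, each complex place contributes a one-dimensional factor, and the trivial-character component of the $\pi_0$-action is one-dimensional. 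The cycles $Z_\ff$ can be, and are, taken $\pi_0$-invariant --- this is exactly what makes $\Theta_\ff(\Pi)$ land in the ray class group $G_\ff$ with \emph{no} archimedean part, rather than its narrow analogue --- so only the trivial-$\pi_0$-character component of $\phi_\Pi$ enters the pairing, and since $\dim \Pi^{\infty,K_f}=1$ the relevant eigenspace $H^q(S_{K_f},\CC)[\Pi^\infty]^{\pi_0\text{-triv}}$ is a line.

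The input playing the role of the Manin--Drinfeld theorem is then that Betti cohomology carries a natural $\QQ$-structure preserved by the Hecke correspondences, so this line is defined over $F$; and the Eisenstein part of the boundary cohomology is killed by the operator $a_\fl(\Pi) - \sigma_\fl - \sigma_\fl^{-1}$ (whose eigenvalue on Eisenstein classes vanishes), which is precisely the factor already built into the norm relation of Theorem~\ref{thm:haran}(ii). Consequently, pairing an $F$-rational generator $\phi_\Pi^0$ of the line against the $Z_\ff$ lands in $F[G_\ff]$; writing $\phi_\Pi = \Omega_\Pi \cdot \phi_\Pi^0$ with $\Omega_\Pi \in \CC^\times$ gives $\Theta_\ff(\Pi) \in \Omega_\Pi \cdot F[G_\ff]$, hence $\omega(\Theta_\ff(\Pi)) \in \Omega_\Pi \cdot F(\omega) \subseteq \Omega_\Pi \cdot F'$, and with the first step this yields $\operatorname{rank}_{\cO_F}\Lambda_\Pi \le 1$. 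After rescaling $\Omega_\Pi$ to absorb the finitely many denominators occurring, $\Lambda_\Pi$ may be replaced by $\cO_F \cdot \Omega_\Pi$. (An alternative to this paragraph is to cite Shimura-type algebraicity of critical $\GL_2$-$L$-values directly: for a fixed archimedean type of $\omega$ the ratio $L_\ff(\Pi,\omega,1)/(\tau(\omega)\cdot\Omega_\Pi)$ is algebraic in $F(\omega)$, and all characters of $G_\ff$ have the same --- trivial --- archimedean type since $\ff$ has no infinite part.)

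The main obstacle is making the bridge to \cite{haran87} precise: verifying that Haran's analytically normalized elements really are the pairing of a class in an extreme-degree, $\pi_0$-trivial eigenspace against $\pi_0$-invariant cycles, and carrying out the archimedean bookkeeping correctly when $K$ is not totally imaginary. For $K$ imaginary quadratic --- the case used in the applications --- all of this is immediate: $\GL_2(K\otimes\RR) = \GL_2(\CC)$ is connected, so there is no component group to track, and the cuspidal cohomology of a Bianchi eigenform is one-dimensional in each of its two degrees, whence $\Lambda_\Pi$ is visibly generated by the single period $\Omega_\Pi$.
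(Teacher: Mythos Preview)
Your approach matches the paper's in outline: both arguments hinge on the cohomological interpretation of the Mazur--Tate elements and on the relevant $\Pi$-eigenspace being a line defined over $F$. The paper phrases this via the duality between Borel--Moore homology $H_r^{\mathrm{BM}}(Y,\ZZ)$ (where the universal Mazur--Tate element lives) and compactly supported cohomology $H^r_c(Y,F)$, with $r = r_1 + r_2$; the key input is Harder's theorem \cite{harder87} that the surjection $H^r_c(Y,\CC) \to H^r_{\mathrm{par}}(Y,\CC)$ admits a unique Hecke-invariant section, so that the $\Pi$-isotypic line in $H^r_{\mathrm{par}}$ lifts canonically to $H^r_c$ and descends to $F$. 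This is the precise Manin--Drinfeld step in this setting, and it replaces your second and third paragraphs in one stroke.

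Your treatment of that step is the weak point. The assertion that the factor $a_\fl(\Pi) - \sigma_\fl - \sigma_\fl^{-1}$ ``kills the Eisenstein part of the boundary cohomology'' does not type-check: this element lives in $F[G_\ff]$, not in the Hecke algebra acting on $H^*(\partial Y)$, and the norm relation of Theorem~\ref{thm:haran}(ii) is stated \emph{after} projection to the $\Pi$-eigenspace (with $a_\fl(\Pi)$ already the scalar eigenvalue, not the operator $T_\fl$). At the universal level the relation reads $T_\fl - \sigma_\fl - \sigma_\fl^{-1}$, and on boundary classes $T_\fl$ acts with eigenvalues coming from Hecke characters, not with $\sigma_\fl + \sigma_\fl^{-1}$; so there is no cancellation of the sort you describe. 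To make this rigorous you need exactly the Hecke-equivariant splitting of $H^r_c \to H^r_{\mathrm{par}}$, i.e.\ Harder's theorem --- and the Shimura-type algebraicity results you offer as an alternative are themselves proved via that splitting, so invoking them here is close to circular. Your discussion of the $\pi_0$-action at the real places and the choice of extreme cohomological degree is a correct and useful elaboration that the paper leaves implicit in its assertion that the $\Pi$-isotypic part of $H^r_c(Y,\CC)$ is one-dimensional.
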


   \begin{proof}
    We know that $\Lambda_{\Pi}$ is finitely-generated, so it suffices to show that $F \otimes_{\cO_F} \Lambda_{\Pi}$ is 1-dimensional.

    To prove this, we must delve a little into the details of Shai Haran's construction. The elements $\Theta_\ff(\Pi)$ for varying $\Pi$ are all obtained from a ``universal Mazur--Tate element'' in the module $\ZZ[G_{\ff}] \otimes H_r^{\mathrm{BM}}(Y, \ZZ)$, where $Y$ is a locally symmetric space for $\GL_2(\AA_K)$ and $H_r^{\mathrm{BM}}$ is Borel--Moore homology (the homological analogue of compactly supported cohomology). Here $r = r_1 + r_2$, where $r_1$ and $r_2$ are the numbers of real and complex places of $K$. The element $\Theta_\ff(\Pi)$ is obtained by integrating the universal Mazur--Tate element against a class in $H^r_{\mathrm{par}}(Y, \CC)$ arising from $\Pi$. However, it follows from the results of \cite{harder87} that there is a unique Hecke-invariant section of the projection map
    \[ H^r_c(Y, \CC) \to H^r_{\mathrm{par}}(Y, \CC), \]
    and the $\Pi$-isotypical component of $H^r_c(Y, \CC)$ is 1-dimensional and descends to $H^r_c(Y, F)$. Thus, after renormalizing by a single (probably transcendental) complex constant depending on $\Pi$, the Mazur--Tate elements for $\Pi$ can be obtained as values of the perfect pairing
    \[ H_r^{\mathrm{BM}}(Y, F) \times H^r_c(Y, F) \to F,\]
    so we deduce that $F \otimes_{\cO_F} \Lambda_{\Pi}$ has dimension 1 as claimed.
   \end{proof}

  \subsection{Stabilization}
   \label{sect:mtpstab}

   We now introduce ``$\Sigma$-stabilized'' versions of the $\Theta_\ff(\Pi)$, again following \cite{haran87} closely. Let $\fF$ be a pseudo-modulus of $K$, and $\Sigma$ the set of primes $\fp$ such that $\fp^\infty \mid \fF$, as before. We assume that none of the primes $\fp \in \Sigma$ divide the conductor of $\Pi$.

   \begin{definition}
    An \emph{$\Sigma$-refinement} of $\Pi$ is the data of, for each $\fp \in \Sigma$, a root $\alpha_\fp \in \overline{F}$ of the polynomial
    \[ P_{\fp}(\Pi) = X^2 - a_{\fp}(\Pi) X + N_{K / \QQ}(\fp) \in F[X]\]
    (the local $L$-factor of $\Pi$ at $\fp$).
   \end{definition}

   We write $\beta_\fp$ for the complementary root to $\alpha_{\fp}$. By enlarging $F$ if necessary, we may assume that the $\alpha_{\fp}$ and $\beta_\fp$ all lie in $F$.

   We introduce a formal operator $R_{\fp}$ on the $\Theta_{\ff}(\Pi)$'s by $R_{\fp} \cdot \Theta_{\ff}(\Pi) = \Theta_{\ff / \fp}(\Pi)$ whenever $\fp \mid \ff$. Then it is easy to see that the $R_{\fp}$ for different $\fp \mid \ff$ commute with each other, so we can define
   \[ \Theta_{\ff}^{\Sigma}(\Pi) = \alpha_{\ff}^{-1} \left( \prod_{\fp \in \Sigma} (1 - \alpha_{\fp}^{-1} R_{\fp}) \right) \Theta_{\ff}(\Pi),\]
   whenever $\ff$ is divisible by all $\fp \in \Sigma$, where $\alpha_\ff$ stands for the product
   \[ \alpha_{\ff} := \prod_{\fp \in \Sigma} \alpha_{\fp}^{-v_{\fp}(\ff)}.\]
   Then one checks that we have
   \[ \Norm_{\ff}^{\ff \fp} \left( \Theta_{\ff\fp}^{\Sigma}(\Pi)\right) = \Theta_{\ff}^{\Sigma}(\Pi)\]
   for any $\fp$ such that $\fp \mid \ff$ and $\fp \in \Sigma$. We can therefore extend the definition of $\Theta_{\ff}^{\Sigma}(\Pi)$ uniquely to all $\ff \mid \fF$ in such a way that this formula still holds.

  \begin{corollary}
   \label{cor:pstab}
   Let $\fF$ be a modulus of $K$ and write $\fF = \fg \fF_\infty$, where $\fF_\infty = \prod_{\fp \in \Sigma} \fp^\infty$ and $\fg$ is coprime to $\Sigma$. Then the elements
   \[ \left\{ \Theta_{\ff\fg}^{\Sigma}(\Pi) : \ff \mid \fF_\infty \right\}
   \]
   defined above form a ray class distribution modulo $\fF$, with values in $F \otimes_{\cO_F} \Lambda_{\Pi}$ and growth parameter $(\alpha_{\fp})_{\fp \in \Sigma}$.
  \end{corollary}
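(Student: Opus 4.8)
The plan is to verify the two conditions in the definition of a ray class distribution for the system $\bigl(\Theta^{\Sigma}_{\ff\fg}(\Pi)\bigr)_{\ff\mid\fF_\infty}$, with coefficient space $V=F\otimes_{\cO_F}\Lambda_\Pi$ and growth parameter $\delta_\fp:=\alpha_\fp$. Two preliminary points are immediate: each $\Theta^{\Sigma}_{\ff\fg}(\Pi)$ lies in $F[G_{\ff\fg}]\otimes_F V$ by construction (it is built out of the elements $\Theta_\bullet(\Pi)\in\ZZ[G_\bullet]\otimes_\ZZ\Lambda_\Pi$ of Theorem~\ref{thm:haran} by applying the operators $R_\fp$ and multiplying by scalars in $F$); and $(\alpha_\fp)_{\fp\in\Sigma}$ really is a growth parameter, since $\alpha_\fp$ is a root of the monic polynomial $P_\fp(\Pi)$ with coefficients in $\cO_F$, hence $\alpha_\fp\in\cO_F$ after the enlargement of $F$ made in \S\ref{sect:mtpstab}, and $\alpha_\fp\neq 0$ because $\alpha_\fp\beta_\fp=N_{K/\QQ}(\fp)\neq 0$.

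Condition \eqref{cond1} (norm compatibility) is essentially built into the construction of \S\ref{sect:mtpstab}: the relation $\Norm_{\ff\fg}^{\ff\fp\fg}\bigl(\Theta^{\Sigma}_{\ff\fp\fg}(\Pi)\bigr)=\Theta^{\Sigma}_{\ff\fg}(\Pi)$ holds for every $\fp\in\Sigma$, and the extension of $\Theta^{\Sigma}_\bullet(\Pi)$ to all moduli dividing $\fF$ was arranged so that it continues to hold. Any two moduli $\ff\mid\ff'\mid\fF_\infty$ differ only in their exponents at primes of $\Sigma$, so iterating this relation over the primes of $\Sigma$ occurring in $\ff'/\ff$ yields $\Norm_{\ff\fg}^{\ff'\fg}\Theta^{\Sigma}_{\ff'\fg}(\Pi)=\Theta^{\Sigma}_{\ff\fg}(\Pi)$, which is \eqref{cond1}.

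The only condition requiring real work is the growth condition \eqref{cond2}. First suppose $\ff\mid\fF_\infty$ is divisible by every prime of $\Sigma$. Then $\delta_{\ff\fg}=\prod_{\fp\in\Sigma}\alpha_\fp^{-v_\fp(\ff)}=\alpha_\ff$, so the scalar $\alpha_\ff^{-1}$ in the definition of $\Theta^{\Sigma}_{\ff\fg}(\Pi)$ is exactly cancelled:
\[
\delta_{\ff\fg}\cdot\Theta^{\Sigma}_{\ff\fg}(\Pi)
=\Bigl(\prod_{\fp\in\Sigma}\bigl(1-\alpha_\fp^{-1}R_\fp\bigr)\Bigr)\Theta_{\ff\fg}(\Pi).
\]
Expanding the product, the right-hand side is an $\cO_F$-combination, with coefficients among the $\pm\prod_{\fp\in S}\alpha_\fp^{-1}$ for $S\subseteq\Sigma$, of the $\cO_F$-integral elements obtained by applying the $R_\fp$ to $\Theta_{\ff\fg}(\Pi)$; since each $\alpha_\fp\in\cO_F$, every coefficient $\prod_{\fp\in S}\alpha_\fp^{-1}$ lies in $\bigl(\prod_{\fp\in\Sigma}\alpha_\fp^{-1}\bigr)\cO_F$, the missing factor $\prod_{\fp\in\Sigma\setminus S}\alpha_\fp$ being integral. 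Hence, setting $\Lambda:=\bigl(\prod_{\fp\in\Sigma}\alpha_\fp^{-1}\bigr)\Lambda_\Pi$ — a lattice in $V$ independent of $\ff$ — we get $\delta_{\ff\fg}\Theta^{\Sigma}_{\ff\fg}(\Pi)\in\cO_F[G_{\ff\fg}]\otimes_{\cO_F}\Lambda$, i.e.\ $\Theta^{\Sigma}_{\ff\fg}(\Pi)\in\cO_F[G_{\ff\fg}]\otimes_{\cO_F}\delta_{\ff\fg}^{-1}\Lambda$. For a general $\ff\mid\fF_\infty$ I would deduce \eqref{cond2} from this special case via \eqref{cond1}: taking $\ff'=\ff\cdot\prod_{\fp\in\Sigma,\,v_\fp(\ff)=0}\fp$ and pushing the inclusion just proved forward along $\Norm_{\ff\fg}^{\ff'\fg}$, one uses that the pushforward $\cO_F[G_{\ff'\fg}]\to\cO_F[G_{\ff\fg}]$ is integral and that $\delta_{\ff'\fg}^{-1}\Lambda\subseteq\delta_{\ff\fg}^{-1}\Lambda$ (again since the $\alpha_\fp$ are integral).

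The main — indeed only — subtlety is that in the non-ordinary case $\alpha_\fp^{-1}$ is not a unit, so one cannot take $\Lambda=\Lambda_\Pi$: the enlargement of the lattice by the single fixed scalar $\prod_{\fp\in\Sigma}\alpha_\fp^{-1}$ is essential, and it is harmless precisely because $\Sigma$ is finite. Everything else is routine bookkeeping with the norm maps.
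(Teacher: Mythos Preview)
Your verification is correct and is precisely the routine check the paper leaves implicit (the corollary is stated without proof, as immediate from the construction in \S\ref{sect:mtpstab}). The two points you single out---that condition \eqref{cond1} is the norm relation already established, and that condition \eqref{cond2} follows because $\alpha_{\ff\fg}^{-1}$ exactly matches $\delta_{\ff\fg}^{-1}$ while the remaining factor $\prod_{\fp\in\Sigma}(1-\alpha_\fp^{-1}R_\fp)$ costs only the fixed denominator $\prod_{\fp\in\Sigma}\alpha_\fp$---are exactly the content of the corollary, and your reduction of the case where some $v_\fp(\ff)=0$ to the divisible case via the norm map is the natural way to handle the extension made at the end of \S\ref{sect:mtpstab}.
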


  \subsection{Consequences for p-adic L-functions}

   We now summarize the results on $p$-adic $L$-functions that can be obtained by applying our theory to the ray class distributions constructed in \S \ref{sect:mtpstab}. As before, let $F$ be a number field, and $E$ be the completion of $F$ at some choice of prime above $p$; and let $v_p$ denote the $p$-adic valuation on $E$ normalized so that $v_p(p) = 1$.

   \begin{theorem}
    \label{thm:padicLgeneral}
    Let $K$ be an arbitrary number field and $\Sigma$ a subset of the primes of $K$ above $p$, and let $\fF$ be the pseudo-modulus $\fg \cdot \prod_{\fp \in \Sigma} \fp^\infty$, where $\fg$ is a modulus coprime to $\Sigma$. Let $\Pi$ be an automorphic representation of $\operatorname{GL}_2 / K$ with Hecke eigenvalues in $F$, satisfying the hypotheses (1) and (2) of \S \ref{sect:mtdefs}, and unramified at the primes in $\Sigma$. Let $\underline\alpha = (\alpha_{\fp})_{\fp \in \Sigma}$ be a $\Sigma$-refinement of $\Pi$ defined over $F$.
    If we have
    \[ h = \sum_{\fp \in \Sigma} e_{\fp} v_p(\alpha_{\fp}) < 1,\]
    then there is a unique distribution $L_{p, \underline\alpha}(\Pi) \in D^h(G_{\fF}, E) \otimes_{\cO_F} \Lambda_\Pi$ such that for all finite-order characters $\omega$ of $G_{\fF}$ whose conductor is divisible by $\fg$, we have
    \begin{multline}
     \label{eq:interp}
     L_{p, \underline\alpha}(\Pi)(\omega) =
     \left( \prod_{\fp \in \Sigma} \alpha_{\fp}^{-\ord_\fp \ff}\right) \cdot \left(\prod_{\fp \in \Sigma, \fp \nmid \ff}(1 - \alpha_\fp^{-1} \omega(\fp)) (1 - \alpha_{\fp}^{-1} \omega(\fp)^{-1})\right) \\ \cdot
     \frac{L_{\ff}(\Pi,\omega, 1)}{\tau(\omega) \cdot |\ff| \cdot (4\pi)^{[K : \QQ]}},
    \end{multline}
    where $\ff$ is the conductor of $\omega$.
   \end{theorem}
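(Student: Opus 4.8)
The plan is to produce $L_{p,\underline\alpha}(\Pi)$ by applying the interpolation theorem of \S\ref{sect:padicanalysis} to the ray class distribution assembled from the $\Sigma$-stabilized Mazur--Tate elements, and then to recover \eqref{eq:interp} by unwinding the stabilization. Fix the prime of $F$ above $p$ whose completion is $E$ and set $W=E\otimes_{\cO_F}\Lambda_\Pi$, a one-dimensional $E$-vector space by Proposition~\ref{prop:manindrinfeld}. By Corollary~\ref{cor:pstab}, the elements $\Theta^{\Sigma}_{\ff\fg}(\Pi)$, extended by the norm maps to all moduli dividing $\fF$, form a ray class distribution modulo $\fF$ with values in $F\otimes_{\cO_F}\Lambda_\Pi\subseteq W$ and growth parameter $\underline\alpha=(\alpha_\fp)_{\fp\in\Sigma}$; each $\alpha_\fp$ is a root of the monic integral polynomial $P_\fp(\Pi)$, hence lies in $\cO_F$, so $\underline\alpha$ is a bona fide growth parameter. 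The quantity $r$ attached to this distribution in Theorem~\ref{thm:rayclassdist1param} is exactly $\sum_{\fp\in\Sigma}e_\fp v_p(\alpha_\fp)=h$, which is $<1$ by hypothesis; so that theorem (applied with coefficients in $W$, which is permitted since the general coefficient case reduces to $V=E$) yields a unique $\widetilde\Theta\in D^h(G_\fF,E)\otimes_E W=D^h(G_\fF,E)\otimes_{\cO_F}\Lambda_\Pi$ whose restriction to $LC(G_\fF,E)$ is this ray class distribution. We define $L_{p,\underline\alpha}(\Pi):=\widetilde\Theta$; its uniqueness subject to \eqref{eq:interp} then follows from the uniqueness clause of Theorem~\ref{thm:rayclassdist1param}.

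It remains to check \eqref{eq:interp}. Let $\omega$ be a finite-order character of $G_\fF$ of conductor $\ff$ with $\fg\mid\ff$, and write $\ff''=\ff\cdot\prod_{\fp\in\Sigma,\,\fp\nmid\ff}\fp$, a modulus dividing $\fF$ and divisible by every prime of $\Sigma$. Since $\omega$ is locally constant and $\widetilde\Theta$ restricts to the ray class distribution, condition~\eqref{cond1} and compatibility of $\omega$ with the norm map $G_{\ff''}\twoheadrightarrow G_\ff$ give $L_{p,\underline\alpha}(\Pi)(\omega)=\widetilde\omega\big(\Theta^{\Sigma}_{\ff''}(\Pi)\big)$, where $\widetilde\omega$ is $\omega$ viewed as a character of $G_{\ff''}$. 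For $\ff''$ divisible by all of $\Sigma$ we have the explicit formula $\Theta^{\Sigma}_{\ff''}(\Pi)=\alpha_{\ff''}^{-1}\big(\prod_{\fp\in\Sigma}(1-\alpha_\fp^{-1}R_\fp)\big)\Theta_{\ff''}(\Pi)$ of \S\ref{sect:mtpstab}. Applying $\widetilde\omega$ and expanding: for each $\fp\in\Sigma$ dividing $\ff$, $R_\fp\Theta_{\ff''}(\Pi)$ is the conorm image of $\Theta_{\ff''/\fp}(\Pi)$ and is killed by $\widetilde\omega$ (which is nontrivial on $\ker(G_{\ff''}\to G_{\ff''/\fp})$), so $(1-\alpha_\fp^{-1}R_\fp)$ may be dropped; for each $\fp\in\Sigma$ not dividing $\ff$, iterating the norm relation $\Norm_{\ff_1}^{\ff_1\fp}\Theta_{\ff_1\fp}(\Pi)=(a_\fp(\Pi)-\sigma_\fp-\sigma_\fp^{-1})\Theta_{\ff_1}(\Pi)$ of Theorem~\ref{thm:haran}(ii), together with $\alpha_\fp+\beta_\fp=a_\fp(\Pi)$, $\alpha_\fp\beta_\fp=N_{K/\QQ}(\fp)$ and $\omega(\sigma_\fp)=\omega(\fp)$, shows that $\fp$ contributes the modified Euler factor $(1-\alpha_\fp^{-1}\omega(\fp))(1-\alpha_\fp^{-1}\omega(\fp)^{-1})$ and reduces the level to exponent $0$ at $\fp$. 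Collecting the normalizing powers of the $\alpha_\fp$ one is left with
\[
 L_{p,\underline\alpha}(\Pi)(\omega)=\left(\prod_{\fp\in\Sigma}\alpha_\fp^{-\ord_\fp\ff}\right)\left(\prod_{\substack{\fp\in\Sigma\\ \fp\nmid\ff}}(1-\alpha_\fp^{-1}\omega(\fp))(1-\alpha_\fp^{-1}\omega(\fp)^{-1})\right)\omega\big(\Theta_\ff(\Pi)\big),
\]
and the special-value formula of Theorem~\ref{thm:haran}(i), valid because $\omega$ is primitive of conductor $\ff$, rewrites $\omega(\Theta_\ff(\Pi))$ as the remaining factor of \eqref{eq:interp}.

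Essentially none of this is new: existence and uniqueness of the distribution come straight from Theorem~\ref{thm:rayclassdist1param}, and the input on Mazur--Tate elements is all contained in Theorem~\ref{thm:haran} and Corollary~\ref{cor:pstab}. The one step demanding genuine care --- and, I expect, the main (if not deep) obstacle --- is the bookkeeping in the second paragraph: reconciling the normalizing scalar $\alpha_{\ff''}^{-1}$, the telescoping product $\prod_{\fp\in\Sigma}(1-\alpha_\fp^{-1}R_\fp)$, and the three-term norm relation at the unramified primes with the precise form of the modified Euler factors appearing in \eqref{eq:interp}, and verifying the (elementary) vanishing of the conorm terms at the ramified primes. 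This is in effect the classical $p$-stabilization computation for $\GL_2/\QQ$, now carried out over the ray class group and simultaneously over all of $\Sigma$. Finally, it is worth noting that the hypothesis $h<1$ is needed exactly where Theorem~\ref{thm:rayclassdist1param} requires it --- for both the existence and the uniqueness of the extension to $D^h(G_\fF,E)$.
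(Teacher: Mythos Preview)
Your proof is correct and follows essentially the same route as the paper's: construct the ray class distribution from Corollary~\ref{cor:pstab}, apply Theorem~\ref{thm:rayclassdist1param} to obtain the unique order-$h$ extension, and then verify the interpolation formula via the norm relations of Theorem~\ref{thm:haran}(ii). The paper's own proof is a two-line sketch deferring the ``short explicit calculation'' at primes $\fp\in\Sigma$ with $\fp\nmid\ff$ to the reader; you have simply written that calculation out.
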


   \begin{proof}
    This is immediate from Theorem \ref{thm:haran} and Corollary \ref{cor:pstab} except for the formula for $L_{p, \underline\alpha}(\Pi)(\omega)$ when $\ff$ is not divisible by all primes in $\Sigma$; the latter follows via a short explicit calculation from part (ii) of Theorem \ref{thm:haran}.
   \end{proof}

   If $K$ is imaginary quadratic and $p$ is split, we obtain a slightly finer statement using the quasi-factorization above:

   \begin{theorem}
    \label{thm:padicLimquad}
    Let $K$ be an imaginary quadratic field in which $p = \fp \fpb$ splits, and let $\Pi$ be an automorphic representation satisfying the hypotheses (1) and (2) of \S \ref{sect:mtdefs} with Hecke eigenvalues in $F$, and unramified at $\fp$ and $\fpb$. Let $\underline\alpha = (\alpha_{\fp}, \alpha_{\fpb})$ be a $p$-refinement of $\Pi$ defined over $F$.

    If we have
    \[ r = v_p(\alpha_{\fp}) < 1 \quad\text{and}\quad s = v_p(\alpha_{\fpb}) < 1,\]
    then there is a unique distribution $L_{p, \underline\alpha}(\Pi) \in D^{(r, s)}(G_{p^\infty}, E) \otimes_{\cO_F} \Lambda_\Pi$ satisfying the interpolating property \eqref{eq:interp}.
   \end{theorem}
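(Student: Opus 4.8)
The plan is to obtain this exactly as Theorem \ref{thm:padicLgeneral} was obtained, but with the rôle played there by Theorem \ref{thm:rayclassdist1param} now taken by Theorem \ref{thm:quasifactthm}, applied to the quasi-factorization of $G_{p^\infty}$ coming from the two primes above $p$. First I would apply Corollary \ref{cor:pstab} with $\Sigma = \{\fp, \fpb\}$ and $\fF = p^\infty$: the $\Sigma$-stabilized Mazur--Tate elements $\{\Theta^\Sigma_\ff(\Pi) : \ff \mid p^\infty\}$ attached to the $p$-refinement $\underline\alpha$ form a ray class distribution $\Theta$ modulo $p^\infty$ with values in $F \otimes_{\cO_F} \Lambda_\Pi$ and growth parameter $(\alpha_\fp, \alpha_{\fpb})$. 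Completing $F$ at the chosen prime above $p$ we regard $\Theta$ as having values in $E \otimes_{\cO_F} \Lambda_\Pi$, which by Proposition \ref{prop:manindrinfeld} is one-dimensional over $E$; fixing a basis, we reduce to treating $\Theta$ as a scalar-valued ray class distribution and record the resulting distributions back in $D^{(r,s)}(G_{p^\infty},E) \otimes_{\cO_F} \Lambda_\Pi$ at the end.

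Next I would equip $G_{p^\infty}$ with the quasi-factorization furnished, since $p$ splits, by the images of $\Lie \cO_{K,\fp}^\times$ and $\Lie \cO_{K,\fpb}^\times$ in $\Lie G_{p^\infty}$, and choose compatible subgroups. By the growth computation already recorded for imaginary quadratic fields above (a ray class distribution modulo $p^\infty$ of growth parameter $(\alpha_\fp, \alpha_{\fpb})$ has, as a locally constant distribution on $G_{p^\infty}$, growth bounded by $(v_p(\alpha_\fp), v_p(\alpha_{\fpb})) = (r,s)$ in the sense of Definition \ref{def:quasifactgrowth}), and since $r,s \in [0,1)$ by hypothesis, Theorem \ref{thm:quasifactthm} produces a unique element $\widetilde\Theta \in D^{(r,s)}(G_{p^\infty},E)$ whose restriction to $LC(G_{p^\infty},E)$ is $\Theta$. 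Tensoring back up, this gives the candidate $L_{p,\underline\alpha}(\Pi) \in D^{(r,s)}(G_{p^\infty},E) \otimes_{\cO_F} \Lambda_\Pi$.

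It then remains to check the interpolation formula \eqref{eq:interp} and uniqueness. For \eqref{eq:interp}: a finite-order character $\omega$ of conductor $\ff$ factors through $G_\ff$, hence lies in $LC(G_{p^\infty},E')$ for a suitable finite extension $E'/E$, so $L_{p,\underline\alpha}(\Pi)(\omega) = \omega(\Theta^\Sigma_\ff(\Pi))$; unravelling the definition of $\Theta^\Sigma_\ff(\Pi)$ in terms of the $\Theta_{\ff'}(\Pi)$ and inserting the special-value formula and norm relations of Theorem \ref{thm:haran} yields the right-hand side of \eqref{eq:interp}, the factors $(1 - \alpha_\fp^{-1}\omega(\fp))(1-\alpha_\fp^{-1}\omega(\fp)^{-1})$ at primes of $\Sigma$ not dividing $\ff$ emerging from part (ii) — this is the same short computation as in the proof of Theorem \ref{thm:padicLgeneral}. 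For uniqueness: if $L'$ also satisfies \eqref{eq:interp}, then $L'(\omega) = \omega(\Theta^\Sigma_\ff(\Pi))$ for every finite-order $\omega$; enlarging $E$ to contain enough roots of unity, the characters of $G_\ff$ span the $E$-linear dual of $E[G_\ff]$ for each $\ff \mid p^\infty$, so $L'$ and $L_{p,\underline\alpha}(\Pi)$ agree on $LC(G_{p^\infty},E)$ and therefore coincide by the uniqueness clause of Theorem \ref{thm:quasifactthm} (then descend from the enlarged field).

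The one step I expect to require genuine care, rather than bookkeeping, is the explicit evaluation giving \eqref{eq:interp} for characters ramified at only one of $\fp$, $\fpb$, which forces one to unwind the operators $R_\fp$, $R_{\fpb}$ in the definition of $\Theta^\Sigma_\ff(\Pi)$ together with the second norm relation of Theorem \ref{thm:haran}; all the $p$-adic-analytic content has already been isolated in Theorem \ref{thm:quasifactthm}. Finally I would record the complement, immediate from the theorem combining Theorem \ref{thm:quasifactthm} with this quasi-factorization, that when moreover $r + s < 1$ the distribution $L_{p,\underline\alpha}(\Pi)$ lies in $D^{r+s}(G_{p^\infty},E) \otimes_{\cO_F} \Lambda_\Pi$ and agrees with the $p$-adic $L$-function of Theorem \ref{thm:padicLgeneral}.
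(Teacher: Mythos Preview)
Your proposal is correct and follows essentially the same approach as the paper: the paper does not spell out a separate proof for this theorem, but its implied argument is precisely to feed the ray class distribution of Corollary~\ref{cor:pstab} into the imaginary-quadratic quasi-factorization theorem of \S3.3 (which packages Theorem~\ref{thm:quasifactthm} together with the growth bound $(v_p(\alpha_\fp),v_p(\alpha_{\fpb}))$), and then invoke the same ``short explicit calculation'' from the proof of Theorem~\ref{thm:padicLgeneral} for the interpolation formula at characters not ramified at both primes. Your added remarks on uniqueness via character-spanning and on the compatibility with Theorem~\ref{thm:padicLgeneral} when $r+s<1$ are accurate elaborations of points the paper leaves implicit.
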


   Note that every $\Pi$ will admit at least one $p$-refinement satisfying the hypotheses of Theorem \ref{thm:padicLimquad}, and indeed if $\Pi$ is non-ordinary at $\fp$ and $\fpb$ then all four $p$-refinements do so; but at most two of the four $p$-refinements, and in many cases none at all, will satisfy the hypotheses of Theorem \ref{thm:padicLgeneral}.

   \begin{remark}
    Applying Theorem \ref{thm:padicLimquad} to the base-change of a classical modular form, we obtain Conjecture 6.13 of our earlier paper \cite{loefflerzerbes11} for all weight 2 modular forms whose central character is trivial.
   \end{remark}

  \begin{remark}
   \label{rmk:othergps}
   Examples of ray class distributions appear to arise in more or less any context where $p$-adic interpolation of $L$-values of non-ordinary automorphic representations over general number fields is considered. For instance, the works of Ash--Ginzburg \cite{ashginzburg94} on $p$-adic L-functions for $\GL_{2n}$ over a number field, and of Januszewski on Rankin--Selberg convolutions on $\GL_n \times \GL_{n-1}$ over a number field \cite{januszewski11}, can both be viewed as giving rise to a ray class distribution (with some parameter depending on the Hecke eigenvalues of the automorphic representations involved). Thus the theory developed in this paper allows these works to be extended to non-ordinary automorphic representations of (sufficiently small) non-zero slope.
  \end{remark}

 \section{The $a_p = 0$ case}

  We now concentrate on a special case of Theorem \ref{thm:padicLimquad}, where $K$ is imaginary quadratic with $p$ split and $a_{\fp}(\Pi) = a_{\fpb}(\Pi) = 0$ (as in the case of the automorphic representation attached to a modular elliptic curve with good supersingular reduction at the primes above $p$, assuming $p \ge 5$). Then the two Hecke polynomials at $\fp$ and $\fpb$ are the same, and we write $\{\alpha, \beta\}$ for their common set of roots, which are both square roots of $-p$ and in particular have normalized $p$-adic valuation $\tfrac{1}{2}$. Thus Theorem \ref{thm:padicLimquad} furnishes four $p$-adic $L$-functions.

  \begin{remark}
   None of these four $L$-functions can be obtained using Theorem \ref{thm:padicLgeneral}. Moreover, the restriction of any of the four to a ``cyclotomic line'' in the space of characters of $G_{p^\infty}$ (i.e.~a coset of the subgroup of characters factoring through the norm map) is a distribution of order 1, and thus not determined uniquely by its interpolating property. Thus, paradoxically, it is easier to interpolate $L$-values at a larger set of twists than a smaller one.
  \end{remark}

  We know that the module $\Lambda_{\Pi}$ in which our Mazur--Tate elements are valued has rank 1 over $\cO_F$. We may thus choose an $\cO_E$-basis $\Omega_{\Pi}$ of $\cO_E \otimes_{\cO_F} \Lambda_{\Pi}$. Let us write
  \[ \mu_{\alpha, \alpha} = \frac{L_{p, (\alpha, \alpha)}(\Pi)}{\Omega_{\Pi}} \in D^{(1/2, 1/2)}(G_{p^\infty}, E)
  \]
  and similarly for $\mu_{\alpha, \beta}$, $\mu_{\beta, \alpha}$, $\mu_{\beta, \beta}$.

  \begin{proposition}
   The distributions
   \begin{align*}
    \mu_{+, +} &= \mu_{\alpha, \alpha} + \mu_{\alpha, \beta} + \mu_{\beta, \alpha} + \mu_{\beta, \beta}\\
    \mu_{+, -} &= \mu_{\alpha, \alpha} - \mu_{\alpha, \beta} + \mu_{\beta, \alpha} - \mu_{\beta, \beta}\\
    \mu_{-, +} &= \mu_{\alpha, \alpha} + \mu_{\alpha, \beta} - \mu_{\beta, \alpha} - \mu_{\beta, \beta}\\
    \mu_{-, -} &= \mu_{\alpha, \alpha} - \mu_{\alpha, \beta} - \mu_{\beta, \alpha} + \mu_{\beta, \beta}
   \end{align*}
   all lie in $D^{\left(1/2, 1/2\right)}(G_{p^\infty}, E)$, and have the property that if $\omega$ is a finite-order character of conductor $\fp^{n_{\fp}} \fpb^{n_{\fpb}}$, with both $n_{\fp}, n_{\fpb} > 0$, then $\mu_{*, \circ}$ vanishes at $\omega$ unless $* = (-1)^{n_{\fp}}$ and $\circ = (-1)^{n_{\fpb}}$.
  \end{proposition}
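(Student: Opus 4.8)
The first assertion is formal: by Theorem \ref{thm:padicLimquad} each $L_{p,(\alpha_\fp,\alpha_\fpb)}(\Pi)$ lies in $D^{(1/2,1/2)}(G_{p^\infty}, E)\otimes_{\cO_F}\Lambda_\Pi$, and since $\Omega_\Pi$ is an $\cO_E$-basis of $\cO_E\otimes_{\cO_F}\Lambda_\Pi$, dividing by $\Omega_\Pi$ exhibits each of $\mu_{\alpha,\alpha},\dots,\mu_{\beta,\beta}$ as an element of $D^{(1/2,1/2)}(G_{p^\infty},E)$; as this is an $E$-vector space, the four signed combinations $\mu_{+,+},\dots,\mu_{-,-}$ lie in it as well. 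So the content of the proposition is entirely in the vanishing statement, which I would prove by directly evaluating the interpolation formula \eqref{eq:interp}.

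Let $\omega$ be a finite-order character of $G_{p^\infty}$ of conductor $\ff=\fp^{n_\fp}\fpb^{n_\fpb}$ with $n_\fp,n_\fpb>0$. The key observation is that both primes of $\Sigma=\{\fp,\fpb\}$ then divide $\ff$, so the Euler-factor product $\prod_{\fp\in\Sigma,\fp\nmid\ff}(\cdots)$ in \eqref{eq:interp} is empty. Hence the formula collapses to
\[
\mu_{\alpha_\fp,\alpha_\fpb}(\omega)=\alpha_\fp^{-n_\fp}\,\alpha_\fpb^{-n_\fpb}\cdot C(\omega),\qquad
C(\omega)=\frac{L_\ff(\Pi,\omega,1)}{\tau(\omega)\cdot|\ff|\cdot(4\pi)^{[K:\QQ]}\cdot\Omega_\Pi},
\]
where $C(\omega)$ depends only on $\omega$ and \emph{not} on the choice of $p$-refinement. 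Now I would invoke the defining feature of the $a_p=0$ case: the common Hecke polynomial at $\fp$ and $\fpb$ is $X^2+p$, so $\beta=-\alpha$ and therefore $\beta^{-n}=(-1)^n\alpha^{-n}$ for every $n\ge 0$. Writing $D=\alpha^{-(n_\fp+n_\fpb)}C(\omega)$, this gives
\[
\mu_{\alpha,\alpha}(\omega)=D,\quad
\mu_{\alpha,\beta}(\omega)=(-1)^{n_\fpb}D,\quad
\mu_{\beta,\alpha}(\omega)=(-1)^{n_\fp}D,\quad
\mu_{\beta,\beta}(\omega)=(-1)^{n_\fp+n_\fpb}D.
\]

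Substituting these into the definitions of $\mu_{+,+},\mu_{+,-},\mu_{-,+},\mu_{-,-}$ and factoring, one finds in each case
\[
\mu_{\pm,\pm}(\omega)=D\,\bigl(1\pm(-1)^{n_\fp}\bigr)\bigl(1\pm(-1)^{n_\fpb}\bigr),
\]
where the two signs match those in the subscript. Since $1+(-1)^n$ vanishes exactly when $n$ is odd and $1-(-1)^n$ exactly when $n$ is even, it follows that $\mu_{*,\circ}(\omega)=0$ unless $*=(-1)^{n_\fp}$ and $\circ=(-1)^{n_\fpb}$ (with $+\leftrightarrow+1$, $-\leftrightarrow-1$), which is the claimed vanishing property. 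There is no serious obstacle here; the only two points requiring care are verifying that the Euler-factor correction genuinely disappears once both conductor exponents are positive, and tracking the sign $(-1)^n$ produced by $\beta^{-n}=(-1)^n\alpha^{-n}$ when passing between the two roots. (Note that the boundedness of the $\mu_{*,\circ}$ promised in the introduction is not part of this statement and requires a separate argument.)
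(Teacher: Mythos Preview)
Your proof is correct and is exactly the approach the paper has in mind: the paper's proof reads in full ``Clear from the interpolating property of the $\mu$'s,'' and you have simply written out that computation explicitly, using $\beta=-\alpha$ to produce the sign factors $(-1)^{n_\fp}$, $(-1)^{n_\fpb}$ and then factoring the resulting $\pm$-combinations.
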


  \begin{proof}
   Clear from the interpolating property of the $\mu$'s.
  \end{proof}

  Thus each of the distributions $\mu_{*, \circ}$, for $*, \circ \in \{+, -\}$, vanishes at three-quarters of the finite-order characters of the $p$-adic Lie group $G_{p^\infty}$.

  We now interpret this statement in terms of divisibility by half-logarithms. Let $\log^+_{\fp}$ be Pollack's half-logarithm on $\cO_{K, \fp}^\times \cong \Zp^\times$ (cf.~\cite{pollack03}) which is a distribution of order 1/2 with the property that $\log^+_{\fp}(\chi)$ vanishes at all characters $\chi$ of $\Zp^\times$ of conductor $p^n$ with $n$ an odd integer (and no others). Similarly, let $\log^-_{\fp}$ be the half-logarithm which vanishes at characters of conductor $p^n$ with $n$ positive and even. We define $\log_\fp^+$ and $\log_{\fp}^-$ as distributions on $G_{p^\infty}$ by pushforward, and similarly $\log^\pm_{\fpb}$.

  \begin{proposition}
   For $*, \circ \in \{+, -\}$, the distribution $\mu_{*, \circ}$ is divisible in $D^{\la}(G_{p^\infty})$ by $\log^{*}_{\fp} \log^{\circ}_{\fpb}$.
  \end{proposition}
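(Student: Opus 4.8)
The plan is to show that $\mu_{*,\circ}/(\log^{*}_{\fp}\log^{\circ}_{\fpb})$, which a priori is only a meromorphic function on the rigid space $\mathcal{X}=\widehat{G_{p^\infty}}$ parametrising continuous characters of $G_{p^\infty}$ (the space on which $D^{\la}(G_{p^\infty})$ is the ring of functions), is in fact holomorphic everywhere, i.e.\ lies in $D^{\la}(G_{p^\infty})$. It is cleanest to work on a genuine product: pulling back along the isogeny $\cO_{K,\fp}^\times\times\cO_{K,\fpb}^\times\to G_{p^\infty}$, whose kernel $\overline{\cE}$ is the finite group of roots of unity of $K$ and whose cokernel is the finite class group of $K$, one reduces the divisibility to the corresponding statement in $D^{\la}(\cO_{K,\fp}^\times\times\cO_{K,\fpb}^\times)$, where $\log^{*}_{\fp}$ is $\log^{*}$ acting in the $\fp$-variable (and $1$ in the other), similarly $\log^{\circ}_{\fpb}$, and $\mu_{*,\circ}$ lands in $D^{(1/2,1/2)}=D^{1/2}(\cO_{K,\fp}^\times)\widehat\otimes D^{1/2}(\cO_{K,\fpb}^\times)$ (the description of $\S\ref{sect:quasifact}$). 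The point of Pollack's half-logarithms is that they have \emph{simple} zeros, at exactly the finite-order characters of conductor $p^{n}$ with $n$ of the prescribed parity; hence $\log^{*}_{\fp}$, resp.\ $\log^{\circ}_{\fpb}$, vanishes to order one along a disjoint family of ``vertical'' hypersurfaces $\{\chi:\chi|_{\fp}=\psi\}$, resp.\ ``horizontal'' ones $\{\chi:\chi|_{\fpb}=\psi'\}$. These two families meet transversally, so a Weierstrass-division argument (performed over an exhaustion of $\mathcal{X}$ by closed polydiscs) reduces the required divisibility to the single assertion: \emph{for each such $\psi$, the restriction of $\mu_{*,\circ}$ to $\{\chi|_{\fp}=\psi\}$ vanishes identically}, and symmetrically in $\fpb$.

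To obtain this vanishing I would pass to one variable. The hypersurface $\{\chi|_{\fp}=\psi\}$ is a translate of the character variety of $\cO_{K,\fpb}^\times$, and evaluating the $\fp$-factor of $\mu_{*,\circ}\in D^{1/2}(\cO_{K,\fp}^\times)\widehat\otimes D^{1/2}(\cO_{K,\fpb}^\times)$ at the locally constant character $\psi$ yields an element $\mu_{*,\circ}(\psi,-)\in D^{1/2}(\cO_{K,\fpb}^\times)$ — a distribution of order $<1$. By the uniqueness half of Theorem~\ref{thm:avv}, such a distribution is determined by its restriction to $LC(\cO_{K,\fpb}^\times,E)$, hence (after enlarging $E$ to contain enough roots of unity) by its values at finite-order characters. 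It therefore suffices to verify that $\mu_{*,\circ}(\omega)=0$ for every finite-order character $\omega$ of $G_{p^\infty}$ with $\omega|_{\fp}=\psi$.

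This last verification is the purely formal step: it is the preceding proposition with the hypothesis $n_{\fpb}>0$ removed. For such an $\omega$, put $n=\ord_{\fp}(\mathrm{cond}\,\omega)$; since $\psi$ is a zero of $\log^{*}$, the integer $n$ has parity opposite to $*$. Whether or not $\fpb\mid\mathrm{cond}\,\omega$, the interpolation formula \eqref{eq:interp} shows that $L_{p,(\alpha_{\fp},\alpha_{\fpb})}(\Pi)(\omega)$ depends on the refinement $\alpha_{\fp}$ at $\fp$ only through the scalar $\alpha_{\fp}^{-n}$. Hence in the signed combination defining $\mu_{*,\circ}$ one may factor out $\alpha^{-n}\pm\beta^{-n}=\alpha^{-n}(1\pm(-1)^{n})$ (sign according to $*$), which vanishes because $\beta=-\alpha$ and $n$ has parity opposite to $*$. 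Therefore $\mu_{*,\circ}(\omega)=0$, which completes the argument.

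I expect the genuine difficulty to lie entirely in the rigid-analytic bookkeeping of the first paragraph rather than in any conceptual obstacle: one must identify precisely the zero divisors of the pushed-forward half-logarithms on the two-dimensional space $\widehat{G_{p^\infty}}$, make the Weierstrass-division step precise there, and carry out the reduction from $G_{p^\infty}$ to the genuine product $\cO_{K,\fp}^\times\times\cO_{K,\fpb}^\times$ while keeping track of the finite kernel and cokernel. The one-variable input (Theorem~\ref{thm:avv}) and the interpolation computation are routine by comparison.
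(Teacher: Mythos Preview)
Your proposal is correct and is essentially the same argument as the paper's: both reduce divisibility by the product $\log^{*}_{\fp}\log^{\circ}_{\fpb}$ to divisibility by each factor separately (via coprimality/transversality of their zero loci), and both establish the latter by restricting to the zero hypersurfaces and invoking the one-variable uniqueness theorem (Theorem~\ref{thm:avv}) for distributions of order $<1$. The only organizational difference is that the paper packages the restriction step via auxiliary ``half-signed'' distributions $\mu_{\pm,\alpha}$, $\mu_{\pm,\beta}$ (and their $\fpb$-analogues), showing for instance that $\mu_{+,\alpha}\in D^{(1/2,1/2)}$ is divisible by $\log^{+}_{\fp}$ with quotient in $D^{(0,1/2)}$, whereas you work directly with $\mu_{*,\circ}$.
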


  \begin{proof}
   We begin by introducing distributions
   \begin{align*}
    \mu_{+, \alpha} &= \mu_{\alpha, \alpha} + \mu_{\beta, \alpha}\\
    \mu_{+, \beta}  &= \mu_{\alpha, \beta} + \mu_{\beta, \beta}\\
    \mu_{-, \alpha} &= \mu_{\alpha, \alpha} - \mu_{\beta, \alpha}\\
    \mu_{-, \beta}  &= \mu_{\alpha, \beta} - \mu_{\beta, \beta}.
   \end{align*}
   We claim that $\mu_{+, \alpha}$ and $\mu_{+, \beta}$ are divisible by $\log^+_{\fp}$, and $\mu_{-, \alpha}$ and $\mu_{-, \beta}$ by $\log^-_{\fp}$. We prove only the first of these four statements, since the proofs of the other three are virtually identical. If $\chi$ is any finite-order character such that $n_\fp(\chi)$ (the power of $\fp$ dividing the conductor of $\chi$) is positive and odd, then $\mu_{+, \alpha}$ vanishes at $\chi$ by construction.

   Since $\mu_{+, \alpha}$ is in $D^{(1/2, 1/2)}(G_{p^\infty}, E)$, it follows that it must vanish at any character of $G$ whose pullback to $\cO_{K, \fp}^\times$ is of finite order with odd p-power conductor. Hence $\mu_{+, \alpha}$ is divisible by $\log^+_{\fp}$ in $D^{(1/2, 1/2)}(G_{p^\infty}, E)$ (and the quotient clearly has order $(0, 1/2)$).

   We now apply the same argument in the other variable, to show that $\mu_{\alpha, +} = \mu_{\alpha, \alpha} + \mu_{\alpha, \beta}$ is divisible by $\log^+_{\fpb}$, etc. This shows that $\mu_{+, +}$ is divisible by both $\log^+_{\fp}$ and $\log^+_{\fpb}$. These two distributions are coprime (if we regard them as rigid-analytic functions on the character space of $G_{p^\infty}$, the intersection of their zero loci is a subvariety of codimension 2) so we are done.
  \end{proof}

  \begin{corollary}
   There exist four bounded measures
   \[ L_{p}^{+,+}, L_{p}^{+,-}, L_{p}^{-,+}, L_{p}^{-,-} \in \Lambda_F(G_{p^\infty})\]
   such that
   \begin{multline*}
    \frac{L_{p, (\alpha, \alpha)}(\Pi)}{\Omega_\Pi} = \frac{1}{4} \left(
    L_{p}^{+,+} \log^+_{\fp} \log^+_{\fpb}
    + L_{p}^{+,-} \log^+_{\fp} \log^-_{\fpb}\right. \\
    \left.
    + L_{p}^{-,+} \log^-_{\fp} \log^+_{\fpb}
    + L_{p}^{-,-} \log^-_{\fp} \log^-_{\fpb}
    \right)
   \end{multline*}
   and similarly for the other three unbounded $L$-functions.
  \end{corollary}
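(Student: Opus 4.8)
The plan is the following. The preceding proposition already exhibits $\mu_{*,\circ}$ as divisible by $\log^*_{\fp}\log^\circ_{\fpb}$ in $D^{\la}(G_{p^\infty})$, so for each pair of signs $*,\circ\in\{+,-\}$ one may simply \emph{define} $L_p^{*,\circ} := \mu_{*,\circ}/(\log^*_{\fp}\log^\circ_{\fpb})$. Two things then remain: (a) to show that $L_p^{*,\circ}$ is a bounded measure; and (b) to deduce the displayed identity (together with its three companions for the other unbounded $L$-functions) by inverting a linear change of variables.

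Part (b) is formal. The four distributions $(\mu_{+,+},\mu_{+,-},\mu_{-,+},\mu_{-,-})$ are obtained from $(\mu_{\alpha,\alpha},\mu_{\alpha,\beta},\mu_{\beta,\alpha},\mu_{\beta,\beta})$ by applying the $4\times 4$ Hadamard matrix $M$ --- the tensor square of $\left(\begin{smallmatrix}1&1\\1&-1\end{smallmatrix}\right)$ --- which satisfies $M^2 = 4I$, whence $M^{-1} = \tfrac14 M$. The first row of $\tfrac14 M$ gives $\mu_{\alpha,\alpha} = \tfrac14(\mu_{+,+}+\mu_{+,-}+\mu_{-,+}+\mu_{-,-})$, and the remaining three rows give the corresponding expressions for $\mu_{\alpha,\beta}$, $\mu_{\beta,\alpha}$, $\mu_{\beta,\beta}$ with the obvious sign changes. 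Substituting $\mu_{*,\circ} = L_p^{*,\circ}\log^*_{\fp}\log^\circ_{\fpb}$ into these four identities produces exactly the asserted formulas.

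For part (a) the idea is to reduce the order one variable at a time. The proof of the preceding proposition already shows that $\mu_{*,\circ}/\log^*_{\fp}$ lies in $D^{(0,1/2)}(G_{p^\infty},E)$; and since $\log^*_{\fp}$ and $\log^\circ_{\fpb}$ are coprime in $D^{\la}(G_{p^\infty})$ (their zero loci meet in codimension $2$, as noted there), divisibility of $\mu_{*,\circ}$ by $\log^\circ_{\fpb}$ forces $\mu_{*,\circ}/\log^*_{\fp}$ to be divisible by $\log^\circ_{\fpb}$ as well, so that $L_p^{*,\circ} = (\mu_{*,\circ}/\log^*_{\fp})/\log^\circ_{\fpb}$ lands in $D^{(0,0)}(G_{p^\infty},E)$, which is precisely the space of bounded measures. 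The one-variable fact underlying both divisions is that $\log^{\pm}$, regarded as an order-$\tfrac12$ distribution on $\cO_{K,\fp}^\times\cong\Zp^\times$, divides every order-$\tfrac12$ distribution vanishing at the characters it kills, with bounded quotient --- this is the $h=\tfrac12$ case of Theorem \ref{thm:avv}, in the form used by Pollack \cite{pollack03}. To run this statement inside a single slot of the quasi-factorization one uses the isomorphism $C^{(r_1,r_2)}(G_{p^\infty},E)\cong C^{r_1}(H_1,E)\,\widehat{\otimes}\,C^{r_2}(H_2,E)$ of \S\ref{sect:quasifact} and applies the one-variable result with coefficients in the Banach space $D^{r_2}(H_2)$; verifying that the Amice--V\'elu--Vishik machinery is uniform enough to work with such Banach-space coefficients --- so that dividing in the $\fp$-slot by an order-$\tfrac12$ distribution that kills the right characters genuinely lowers the $\fp$-order from $\tfrac12$ to $0$ while leaving the $\fpb$-order untouched --- is the step I expect to be the main obstacle. (The descent from $E$- to $F$-coefficients, giving membership in $\Lambda_F(G_{p^\infty})$, is then routine, using that the Mazur--Tate elements and the half-logarithms are defined over $F$ and that $\operatorname{Gal}(E/F)$ merely permutes the four refinements $(\alpha,\alpha),\dots,(\beta,\beta)$ compatibly with the sign combinations.)
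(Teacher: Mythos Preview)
Your proposal is correct and follows exactly the line the paper intends: the paper states the Corollary with no proof, treating it as immediate from the preceding Proposition, and your argument is precisely the natural unpacking of that --- define $L_p^{*,\circ}$ as the quotient, invert the Hadamard change of basis for the displayed identity, and use the order-lowering in each slot (already asserted in the paper's parenthetical ``the quotient clearly has order $(0,1/2)$'') to get boundedness. Your honest flagging of the Banach-coefficient Amice--V\'elu--Vishik step is appropriate; it is indeed the only point requiring care, but it goes through by the same wavelet-basis argument as in \S\ref{sect:quasifact}, so there is no genuine gap.
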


  \begin{remark}
   If $\Pi$ is the base-change to $K$ of a weight 2 modular form, then $\mu_{\alpha, \alpha}$ and $\mu_{\beta, \beta}$ coincide with the distributions constructed in \cite{kim-preprint}, up to minor differences in the local interpolation factors. Hence the above corollary proves the conjecture formulated in \emph{op.cit.}. However, it does not appear to be possible to construct the signed distributions $L_{p}^{+,+}$ etc solely from $\mu_{\alpha, \alpha}$ and $\mu_{\beta, \beta}$; it seems to be necessary to use $\mu_{\alpha \beta}$ and $\mu_{\beta \alpha}$ as well, and these last two are apparently not amenable to construction via Rankin--Selberg convolution techniques as in \emph{op.cit.}.
  \end{remark}

\providecommand{\MR}[1]{\relax}
\renewcommand{\MR}[1]{~}%
\providecommand{\href}[2]{#2}
\newcommand{\articlehref}[2]{\href{#1}{#2}}

\end{document}